%
%
%
%
%
\documentclass[12pt]{amsart}
\setlength{\textheight}{21.6cm}
\setlength{\textwidth}{13.5 cm}
\setlength{\oddsidemargin}{1.25cm}
\setlength{\topmargin}{-0.5cm}

\usepackage{amsmath}
\usepackage{amsthm}
\usepackage{amssymb}
\usepackage{latexsym}
\usepackage{amscd}
\usepackage[all]{xy}
\usepackage{mathrsfs}

\newtheorem{theorem}{Theorem}  
\newtheorem{lemma}[theorem]{Lemma}
\newtheorem{proposition}[theorem]{Proposition}

\newtheorem{corollary}[theorem]{Corollary}

\newtheorem{remar}[theorem]{Remark}
\renewenvironment{proof}{Proof:\ \ \ }{\QED}
\newenvironment{remark}{\begin{remar}\rm}{\end{remar}}

\theoremstyle{definition}

\newcommand{\QED}{{\unskip\nobreak\hfil\penalty50%
\hskip1em\hbox{}\nobreak\hfil $\Box$%
\parfillskip=0pt \finalhyphendemerits=0 \par\medskip\noindent}}
\newcommand{\bfind}[1]{\index{#1}{\bf #1}}

\newcommand{\n}{\par\noindent}

\newcommand{\sn}{\par\smallskip\noindent}

\newcommand{\bn}{\par\bigskip\noindent}
\newcommand{\pars}{\par\smallskip}
\newcommand{\parm}{\par\medskip}
\newcommand{\parb}{\par\bigskip}
\newcommand{\card}{\mbox{\rm card}}
\newcommand{\On}{\mbox{\rm On}}
\newcommand{\cf}{\mbox{\rm cf}}
\newcommand{\ci}{\mbox{\rm ci}}
\newcommand{\Coin}{\mbox{\rm Coin}}
\newcommand{\Cofin}{\mbox{\rm Cofin}}
\newcommand{\supp}{\mbox{\rm supp}\,}
\newcommand{\Rl}{R_{\mbox{\tiny\rm left}}}
\newcommand{\Rr}{R_{\mbox{\tiny\rm right}}}
\newcommand{\phil}{\varphi_{\mbox{\tiny\rm left}}}
\newcommand{\phir}{\varphi_{\mbox{\tiny\rm right}}}
\newcommand{\isom}{\simeq}
\newcommand{\zero}{\underline{0}}

\newcommand{\R}{\mathbb R}

\newcommand{\N}{\mathbb N}
\newcommand{\Z}{\mathbb Z}
\newcommand{\Reg}{{\rm Reg}}
\newcommand{\cal}{\mathcal}
\newcommand{\pH}{\mathop{\raisebox{-.2ex}{\rule[-1pt]{0pt}{1pt}%
\mbox{\large\bf H}}}}
\newcommand{\ovl}[1]{\overline{#1}}

\begin{document}
\title[Symmetrical completeness]{Symmetrically complete ordered
sets, abelian groups and fields}
\author{Katarzyna \& Franz-Viktor Kuhlmann, Saharon Shelah}
\address{Department of Mathematics \& Statistics, University of
Saskatchewan, 106 Wiggins Road, Saskatoon, SK, S7N 5E6, Canada}
\email{fvk@math.usask.ca}
\address{Institute of Mathematics, Silesian University, Bankowa 14,
40-007 Katowice, Poland}
\email{kmk@math.us.edu.pl}
\address{Department of Mathematics,
The Hebrew University of Jerusalem,
Jeru\-salem, Israel}
\email{shelah@math.huji.ac.il}

\date{August 1, 2013}
\thanks{The research of the second author was partially supported
by a Canadian NSERC grant and a sabbatical grant from the University
of Saskatchewan.\\
The third author would like to thank the Israel Science Foundation for
partial support of this research (Grant no. 1053/11). Paper 1024 on his
publication list.}

\begin{abstract}
We characterize and construct linearly ordered sets, abelian groups and
fields that are {\emph symmetrically complete}, meaning that the
intersection over any chain of closed bounded intervals is nonempty.
Such ordered abelian groups and fields are important because
generalizations of Banach's Fixed Point Theorem hold in them.
We prove that symmetrically complete ordered abelian groups and
fields are divisible Hahn products and real closed power series fields,
respectively. We show how to extend any given ordered set, abelian group
or field to one that is symmetrically complete. A main part of the paper
establishes a detailed study of the cofinalities in cuts.
\end{abstract}
\maketitle


%
%
\section{Introduction}
In the paper \cite{[S]}, the third author introduced the notion of
``symmetrically complete'' ordered fields and proved that every ordered
field can be extended to a symmetrically complete ordered field. He also
proved that an ordered field $K$ is symmetrically complete if and only
if every nonempty chain of closed bounded intervals in $K$ has a
nonempty intersection; talking of chains of intervals, we refer to the
partial ordering by inclusion. It is this property that is particularly
interesting as it allows to prove fixed point theorems for such fields
that generalize Banach's Fixed Point Theorem, replacing the usual metric
of the reals by the distance function that is derived from the ordering,
see \cite{[KK]}. In accordance with the notion used in that paper, we
will call a linearly ordered set $(I,<)$ \bfind{spherically complete
w.r.t.\ the order balls} if every nonempty chain of closed bounded
intervals has a nonempty intersection. Ordered fields and ordered
abelian groups shall be called spherically complete w.r.t.\ the order
balls if the underlying linearly ordered set is.

\pars
It is not a priori clear whether fields that are spherically complete
w.r.t.\ the order balls, other than the reals themselves, do exist. At
first glance, the above condition on chains of intervals seems to imply
that the field is cut complete and hence isomorphic to the reals. But
it is shown in \cite{[S]} that there are arbitrarily large fields with
this property. Let us describe the background in more detail; for some
of the notions used, see Section~\ref{sectprel}.

A \bfind{cut} in a linearly ordered set $I$ is a pair $C=(D,E)$ with a
\bfind{lower cut set} $D$ and an \bfind{upper cut set} $E$ if $I=D\cup
E$ and $d<e$ for all $d\in D$, $e\in E$. Throughout this paper, when we
talk of cuts we will mean \bfind{Dedekind cuts}, that is, cuts with $D$
and $E$ nonempty. By the \bfind{cofinality} of the cut $C$ we mean the
pair $(\kappa,\lambda)$ where $\kappa$ is the cofinality of $D$, denoted
by $\cf (D)$, and $\lambda$ is the \bfind{coinitiality} of $E$, denoted
by $\ci (E)$. Recall that the coinitiality of a linearly ordered set is
the cofinality of this set under the reversed ordering. Recall further
that cofinalities and coinitialities of ordered sets are regular
cardinals.

We will call a linearly ordered set $(I,<)$ \bfind{symmetrically
complete} if every cut $C$ in $I$ is \bfind{asymmetric}, that is,
$\kappa\ne\lambda$. Ordered fields and ordered abelian groups shall be
called symmetrically complete if the underlying linearly ordered set is.
For example, the reals are symmetrically complete because every cut $C$
is \bfind{principal} (also called \bfind{realized}), that is, either $D$
has a maximal element or $E$ has a minimal element, in which case the
cofinality of $C$ is either $(1,\aleph_0)$ or $(\aleph_0,1)$. We have
the following characterization of symmetrical completeness, which will
be proved in Section~\ref{sectprel}:

\begin{proposition}                         \label{scobcas}
A linearly ordered set $I$ is spherically complete w.r.t.\ the order
balls if and only if every nonprincipal cut in $I$ is asymmetric.
\end{proposition}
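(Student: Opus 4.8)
The plan is to prove both directions by contraposition, using a dictionary between nested chains of closed bounded intervals with empty intersection and symmetric nonprincipal cuts.

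For the implication that spherical completeness forces every nonprincipal cut to be asymmetric, I would start from a nonprincipal \emph{symmetric} cut $C=(D,E)$ with $\cf(D)=\ci(E)=\kappa$ and build a chain with empty intersection. Pick a strictly increasing sequence $(a_\alpha)_{\alpha<\kappa}$ cofinal in $D$ and a strictly decreasing sequence $(b_\alpha)_{\alpha<\kappa}$ coinitial in $E$. Since every element of $D$ lies below every element of $E$ we have $a_\alpha<b_\alpha$, so the $[a_\alpha,b_\alpha]$ are genuine intervals, and the monotonicity of the two sequences makes them a chain under inclusion. An element $x$ of $\bigcap_\alpha[a_\alpha,b_\alpha]$ would satisfy $x\ge a_\alpha$ for all $\alpha$, hence $x\notin D$ (as $(a_\alpha)$ is cofinal and $D$ has no largest element), and likewise $x\notin E$; since $D\cup E=I$ this is impossible, so the intersection is empty and $I$ is not spherically complete.

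For the converse I would assume that every nonprincipal cut is asymmetric and take a nonempty chain $\{[a_i,b_i]\}_{i\in J}$ whose intersection I suppose to be empty, aiming for a contradiction. If the chain has a smallest member then the intersection equals that member and is nonempty, so I may assume there is no smallest interval. Put $D=\{x\in I:x<a_i \text{ for some } i\}$ and $E=\{x\in I:x>b_i \text{ for some } i\}$. The nestedness of the chain gives $a_i\le b_j$ for all $i,j$, from which $D$ and $E$ are disjoint, $D$ is a lower set, $E$ an upper set, and every element of $D$ precedes every element of $E$; emptiness of the intersection yields $D\cup E=I$, and a short argument (a maximal left endpoint, or a minimal right endpoint, would already lie in the intersection) shows $D$ and $E$ are nonempty. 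Hence $(D,E)$ is a Dedekind cut.

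The heart of the argument is then to show this cut is symmetric and nonprincipal. The left endpoints $A=\{a_i\}$ are cofinal in $D$ and, by the same maximal-endpoint observation, have no largest element, so $\cf(D)=\cf(A)$; dually $\ci(E)=\ci(B)$ for the right endpoints $B=\{b_i\}$. Let $\theta$ be the (regular) cofinality of the chain ordered by reverse inclusion, and fix a strictly shrinking cofinal subfamily $\{[a_\alpha,b_\alpha]\}_{\alpha<\theta}$; its left endpoints are weakly increasing and cofinal in $A$, its right endpoints weakly decreasing and coinitial in $B$. I would then invoke the following fact: a weakly monotone $\theta$-sequence with $\theta$ regular whose range has no extremum has cofinality exactly $\theta$ — otherwise a shorter cofinal subsequence would, by regularity of $\theta$, have its indices bounded below $\theta$, and beyond that bound the sequence would be constant, producing an extremum. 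Applying this on both sides gives $\cf(A)=\theta=\ci(B)$, so $\cf(D)=\ci(E)$; since $\theta$ is infinite the cut is nonprincipal, contradicting the hypothesis. The step I expect to be the main obstacle is precisely this symmetry computation $\cf(D)=\ci(E)$: one must argue that the single regular cardinal $\theta$ governing how fast the chain shrinks is transmitted to both the left and the right endpoint sets, and the delicate point is handling the plateaus where only one endpoint of the interval moves, which is exactly where the regularity of $\theta$ and the no-extremum property are used.
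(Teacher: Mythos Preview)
Your proof is correct and follows essentially the same approach as the paper: both directions pass between chains of closed intervals and cuts built from the endpoint sets, and the crux in each is that the single regular cardinal governing the chain gives the cofinality on both sides of the resulting cut. The paper organizes the harder direction as a direct argument with non-strict inequalities and a short case analysis (allowing $D\cap E\neq\emptyset$ or $D\cup E\subsetneq I$) and simply asserts $\cf(D)=\mu=\ci(E)$ in the cut case, whereas you argue by contradiction with strict inequalities and supply the regularity argument for that equality explicitly---which is in fact the one step the paper leaves to the reader.
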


Note that $\Z$ has these properties, but a discretely ordered abelian
group is never symmetrically complete. On the other hand, there are no
cuts with cofinality $(1,1)$ in densely ordered abelian groups and in
ordered fields, so they are symmetrically complete as soon as every
nonprincipal cut is asymmetric.

\pars
The main aim of this paper is to characterize the symmetrically complete
ordered abelian groups and fields. Obviously, an ordered field is
symmetrically complete if and only if its underlying additive
ordered abelian group is. But ordered abelian groups also appear as the
value groups of nonarchimedean ordered fields w.r.t.\ their natural
valuations.

More generally, we will need the \bfind{natural valuation} of any
ordered abelian group $(G,<)$, which we define as follows. Two elements
$a,b\in G$ are called \bfind{archimedean equivalent} if there is some
$n\in\N$ such that $n|a|\geq |b|$ and $n|b|\geq |a|$. The ordered
abelian group $(G,<)$ is archimedean ordered if all nonzero elements are
archimedean equivalent. If $0\leq a<b$ and $na<b$ for all $n\in\N$, then
``$a$ is infinitesimally smaller than $b$'' and we will write $a\ll b$.
We denote by $va$ the archimedean equivalence class of $a$. The set of
archimedean equivalence classes can be ordered by setting $va>vb$ if and
only if $|a|<|b|$ and $a$ and $b$ are not archimedean equivalent, that
is, if $n|a|<|b|$ for all $n\in\N$. We write $\infty:=v0\,$; this is the
maximal element in the linearly ordered set of equivalence classes. The
function $a\mapsto va$ is a group valuation on $G$, i.e., it satisfies
$va=\infty\Leftrightarrow a=0$ and the ultrametric triangle law
\sn
{\bf (UT)} \ $v(a-b)\>\geq\min\{va,vb\}\,$,
\sn
and by definition,
\[
0\leq a\leq b\>\Longrightarrow\>va\geq vb\;.
\]
The set $vG:=\{vg\mid 0\ne g\in G\}$ is called the \bfind{value set} of
the valued abelian group $(G,v)$. For every $\gamma\in vG$, the quotient
${\cal C}_\gamma:= {\cal O}_\gamma/{\cal M}_\gamma$, where
${\cal O}_\gamma:=\{g\in G\mid vg\geq\gamma\}$ and ${\cal M}_\gamma:=
\{g\in G\mid vg>\gamma\}$, is an archimedean ordered abelian group
(hence embeddable in the ordered additive group of the reals, by the
Theorem of H\"older); it is called an \bfind{archimedean component} of
$G$. The natural valuation induces an ultrametric given by $u(a,b):=
v(a-b)$.

We define the smallest \bfind{ultrametric ball} $B_u(g,h)$ containing
the elements $g$ and $h$ to be
\[
B_u(a,b)\>:=\>\{g\mid v(a-g)\geq v(a-b)\}\>=\>
\{g\mid v(b-g)\geq v(a-b)\}
\]
where the last equation holds because in an ultrametric ball, every
element is a center. For the basic facts on ultrametric spaces, see
\cite{[KU3]}. Note that all ultrametric balls are cosets of convex
subgroups in $G$ (see \cite{[KU4]}).
%
%
We say that an ordered abelian group (or an ordered field) is
\bfind{spherically complete w.r.t.\ its natural valuation} if every
nonempty chain of ultrametric balls (ordered by inclusion) has a
nonempty intersection. The ordered abelian groups that are spherically
complete w.r.t.\ their natural valuation are precisely the Hahn products
(see \cite{[KU4]} or \cite{[KS]}); see Section~\ref{secthahn} for the
definition and basic properties of Hahn products.

\pars
If $(K,<)$ is an ordered field, then we consider the natural
valuation on its ordered additive group and define $va+vb:=v(ab)$.
This turns the set of archimedean classes into an ordered abelian group,
with neutral element $0:=v1$ and inverses $-va=v(a^{-1})\,$. In this
way, $v$ becomes a field valuation (with additively written value
group). It is the finest valuation on the field $K$ which is compatible
with the ordering. The residue field, denoted by $Kv$, is archimedean
ordered, hence by the version of the Theorem of H\"older for ordered
fields, it can be embedded in the ordered field $\R$. Via this
embedding, we will always identify it with a subfield of $\R$.

\begin{remark}
In contrast to the notation for the natural valuation (in the Baer
tradition) that we have used in \cite{[KK]}, we use here the Krull
notation because it is more compatible with our constructions in
Section~\ref{sectcon}. In this notation, two elements in an ordered
abelian group or field are close to each other when the value of their
difference is large.
\end{remark}

Every ordered field that is spherically complete
w.r.t.\ its natural valuation is maximal, in the sense of \cite{[Ka]}.
In this paper Kaplansky shows that under certain conditions, which in
particular hold when the residue field has characteristic $0$, every
such field is isomorphic to a power series field. In general, a
nontrivial factor system is needed on the power series field, but it is
not needed for instance when the residue field is $\R$.

In \cite{[KK]}, we have already proved that if an ordered abelian group
$(G,<)$ is spherically complete w.r.t.\ the order balls, then it is
spherically complete w.r.t.\ its natural valuation $v$. If $G$ is even
an ordered field, then we proved that in addition, it has residue field
$\R$. From this and Proposition~\ref{scobcas}, we obtain:

\begin{proposition}                               \label{scoscu}
If an ordered abelian group is symmetrically complete,
then it is spherically complete w.r.t.\ its natural valuation.
If an ordered field is symmetrically complete,
then it is spherically complete w.r.t.\ its natural valuation $v$ and
has residue field $Kv=\R$.
\end{proposition}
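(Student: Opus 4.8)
The plan is to chain together the two results that immediately precede the statement, using Proposition~\ref{scobcas} as the bridge. First I would observe that symmetric completeness is, by definition, the assertion that \emph{every} cut in the underlying linearly ordered set is asymmetric; in particular every nonprincipal cut is asymmetric. This is precisely the hypothesis appearing in Proposition~\ref{scobcas}, so I can conclude at once that a symmetrically complete ordered abelian group, and likewise a symmetrically complete ordered field, is spherically complete w.r.t.\ the order balls.

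Second, I would invoke the results quoted from \cite{[KK]} just above the statement. For any ordered abelian group $(G,<)$, spherical completeness w.r.t.\ the order balls implies spherical completeness w.r.t.\ the natural valuation $v$; and in the case of an ordered field it additionally forces the residue field to satisfy $Kv=\R$. Applying these to the group, respectively field, produced in the first step yields both assertions of the proposition directly.

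The argument involves no genuine obstacle, since the substantive work is carried out in Proposition~\ref{scobcas} and in \cite{[KK]}; here one is merely composing the two implications. The only point deserving a word of justification is the passage from ``every cut is asymmetric'' to ``every nonprincipal cut is asymmetric,'' and this is immediate, the latter being a formally weaker condition. I would remark in passing that for densely ordered groups and for ordered fields the two conditions in fact coincide, because every principal cut there has cofinality $(1,\lambda)$ or $(\kappa,1)$ with the infinite side a regular cardinal distinct from $1$, hence is automatically asymmetric; but this refinement is not needed to obtain the stated implications.
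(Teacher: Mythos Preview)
Your proof is correct and follows essentially the same route as the paper: the paper derives the proposition in one line from Proposition~\ref{scobcas} together with the results quoted from \cite{[KK]}, and your argument spells out precisely this chain of implications.
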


In the present paper, we wish to extend these results. It turns out that
for an ordered abelian group to be symmetrically complete, the same must
be true for the value set $vG$, and in fact, it must have an even
stronger property. We will call a cut with cofinality $(\kappa,\lambda)$
in a linearly ordered set $(I,<)$ \bfind{strongly asymmetric} if
$\kappa\ne\lambda$ and at least one of $\kappa,\,\lambda$ is
uncountable. We will call $(I,<)$ \bfind{strongly symmetrically
complete} if every cut in $I$ is strongly asymmetric, and we will call
it \bfind{extremely symmetrically complete} if in addition, the
coinitiality and cofinality of $I$ are both uncountable. The reals are
not strongly symmetrically complete.

\pars
In Section~\ref{sectprf}, we will prove the following results:

\begin{theorem}                             \label{MTag}
An ordered abelian group $(G,<)$ is symmetrically complete if and
only if it is spherically complete w.r.t.\ its natural valuation $v$, has
a strongly symmetrically complete value set $vG$ and all archimedean
components ${\cal C}_\gamma$ are isomorphic to $\R$. It is strongly
symmetrically complete if and only if in addition, $vG$ has uncountable
cofinality, and it is extremely symmetrically complete if and only if in
addition, $vG$ is extremely symmetrically complete.
\end{theorem}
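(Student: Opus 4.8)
The plan is to prove both directions of the main equivalence by relating cuts in $G$ to cuts in the value set $vG$ and in the archimedean components $\mathcal{C}_\gamma$. The key structural insight is that, by Proposition~\ref{scoscu}, the hypothesis of symmetric completeness already forces $G$ to be spherically complete w.r.t.\ its natural valuation, hence a Hahn product; so the real content is to show how the cofinalities of cuts in such a Hahn product are controlled by $vG$ and the $\mathcal{C}_\gamma$. For the forward direction, I would argue contrapositively: if $vG$ fails to be strongly symmetrically complete, or if some $\mathcal{C}_\gamma$ is not isomorphic to $\R$, then I would construct an explicit symmetric cut in $G$.

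First I would analyze how a cut $C=(D,E)$ in $G$ determines data on the value set. Given an element $a$ approaching the cut, the values $v(a-g)$ for $g$ on the other side form a relevant subset of $vG$, and I expect the cofinality pair $(\kappa,\lambda)$ of $C$ to be computed from either (i) a cut in $vG$, when the cut is ``transcendental'' over the valuation in the sense that the approaching sequence keeps increasing its value, or (ii) a cut inside a single archimedean component $\mathcal{C}_\gamma\hookrightarrow\R$, when the value stabilizes. In case (ii), since $\mathcal{C}_\gamma$ embeds in $\R$, any proper cut in it has cofinality pair drawn from $\{(1,\aleph_0),(\aleph_0,1),(\aleph_0,\aleph_0)\}$; the symmetric possibility $(\aleph_0,\aleph_0)$ occurs precisely when $\mathcal{C}_\gamma\ne\R$ (a nonsurjective embedding leaves a symmetric Dedekind gap), which is exactly why we need all components to equal $\R$. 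In case (i), the cofinality pair of $C$ matches that of the induced cut in $vG$, which explains why $vG$ must be \emph{strongly} symmetrically complete: the spherical completeness of $G$ lets an uncountable cofinality on the value side be ``filled in'' unless it is genuinely asymmetric.

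The main obstacle, and the step requiring the most care, is proving that \emph{every} cut in $G$ falls cleanly into one of these two cases and computing its cofinality pair correctly, including the subtle role of countable cofinalities. The delicate point is why $vG$ must be strongly symmetrically complete rather than merely symmetrically complete: a cut in $vG$ with cofinality $(\aleph_0,\aleph_0)$, though asymmetric is disallowed because it would lift to a symmetric cut in $G$. Concretely, I would take a cut in $vG$ of cofinality $(\aleph_0,\aleph_0)$ and build an element-wise cofinal/coinitial approach in $G$ using supports along a countable increasing sequence of values, and verify by the Hahn product structure and spherical completeness that the resulting cut in $G$ has cofinality $(\aleph_0,\aleph_0)$ as well — a genuine symmetric cut, contradicting symmetric completeness of $G$. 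Conversely, if at least one of $\kappa,\lambda$ is uncountable on the value side, the asymmetry is inherited.

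For the converse direction, assuming $G$ is a Hahn product with components $\R$ and strongly symmetrically complete $vG$, I would show every cut in $G$ is asymmetric by the same case division: cuts of type (ii) inside a component $\mathcal{C}_\gamma=\R$ are principal (as $\R$ is cut complete), hence have cofinality $(1,\aleph_0)$ or $(\aleph_0,1)$ and are asymmetric; cuts of type (i) inherit a strongly asymmetric cofinality pair from $vG$, hence are asymmetric. The strong and extreme versions then follow by adding the stated conditions on the cofinality of $vG$ and on $vG$ itself: the cofinality and coinitiality of $G$ are controlled by those of the value set (for the ``unbounded'' directions) together with the component at the bottom value, and I would check that requiring $vG$ to have uncountable cofinality, respectively to be extremely symmetrically complete, translates exactly into the corresponding property for $G$. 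I expect these last refinements to be largely bookkeeping once the main cut analysis is in place.
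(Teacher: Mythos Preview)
Your overall two-case decomposition matches the paper's (their ``type I'' = your case (ii), their ``type II'' = your case (i)), but there is a genuine gap in how you compute the cofinalities of type (ii) cuts in $G$, and this gap is exactly where the \emph{strong} symmetric completeness of $vG$ enters.

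When the induced cut $C'$ in $\mathcal{C}_\gamma=\R$ is principal, say with cofinality $(1,\aleph_0)$, the lift to $G$ does \emph{not} have cofinality $(1,\aleph_0)$: the preimage of the maximal element of $D'$ is an entire coset $a+\mathcal{M}_\gamma$, which is a final segment of $D$, so $\cf(D)=\cf(\mathcal{M}_\gamma)$. Thus the cut in $G$ has cofinality $(\cf(\mathcal{M}_\gamma),\aleph_0)$, and for this to be asymmetric you need $\cf(\mathcal{M}_\gamma)$ uncountable. By Lemma~\ref{gco}(3), $\cf(\mathcal{M}_\gamma)=\max\{\aleph_0,\ci(\{\delta\in vG:\delta>\gamma\})\}$, so this is precisely the condition that the cut $\gamma^+$ in $vG$ have cofinality $(1,\lambda)$ with $\lambda$ uncountable. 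A symmetric issue arises for type II cuts: when the induced cut in $vG$ has cofinality $(\tilde\kappa,1)$, the lift to $G$ has cofinality $(\tilde\kappa,\aleph_0)$, not $(\tilde\kappa,1)$, because the preimage of the minimal value is a whole archimedean class with cofinality $\aleph_0$.

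This also corrects your explanation of why ``strong'' is needed: it is \emph{not} about $(\aleph_0,\aleph_0)$ cuts in $vG$ (those are symmetric, already excluded by ordinary symmetric completeness). Rather, asymmetric-but-not-strongly-asymmetric cuts in $vG$, namely those of cofinality $(1,\aleph_0)$ or $(\aleph_0,1)$, lift to symmetric $(\aleph_0,\aleph_0)$ cuts in $G$ via the mechanism above. Once you incorporate the $\cf(\mathcal{M}_\gamma)$ correction into your case (ii), and the ``$1\mapsto\aleph_0$'' correction into your case (i), the rest of your outline goes through as the paper does.
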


Now we turn to ordered fields.

\begin{theorem}                             \label{MTf}
An ordered field $K$ is symmetrically complete if and only if it is
spherically complete w.r.t.\ its natural valuation $v$, has residue
field $\R$ and a strongly symmetrically complete value group $vK$.
Further, the following are equivalent:
\sn
a) $K$ is strongly symmetrically complete,
\sn
b) $K$ is extremely symmetrically complete,
\sn
c) $K$ is
spherically complete w.r.t.\ its natural valuation $v$, has residue
field $\R$ and an extremely symmetrically complete value group $vK$.
\end{theorem}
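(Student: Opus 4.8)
The plan is to deduce everything from Theorem \ref{MTag} by viewing the ordered field $K$ as an ordered abelian group under addition, for which all four completeness notions (symmetric, strongly symmetric, and extremely symmetric completeness, and spherical completeness w.r.t.\ $v$) coincide with the corresponding notions for the field. Thus Theorem \ref{MTag} applies verbatim with $G$ replaced by $(K,+,<)$ and $vG$ by the value group $vK$. What remains is to translate the two group-theoretic hypotheses appearing there — that all archimedean components ${\cal C}_\gamma\isom\R$, and the conditions on $vK$ — into the field-theoretic hypotheses of the present theorem.

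For the residue field, I would use that in a field the archimedean components are all mutually isomorphic as ordered abelian groups. Indeed, for $\gamma\in vK$ choose $t\in K$ with $vt=\gamma$ and $t>0$; multiplication by $t$ carries ${\cal O}_0$ onto ${\cal O}_\gamma$ and ${\cal M}_0$ onto ${\cal M}_\gamma$, hence induces an order isomorphism ${\cal C}_0\isom{\cal C}_\gamma$. Since ${\cal C}_0$ is just the additive group of the residue field $Kv$, the condition ``all ${\cal C}_\gamma\isom\R$'' is equivalent to ``$Kv\isom\R$ as an ordered group''. As $Kv$ is embedded in $\R$ and every order-preserving group homomorphism $\R\to\R$ is multiplication by a nonnegative scalar (H\"older), such an isomorphism forces $Kv=\R$. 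This yields the first equivalence of the theorem directly from Theorem \ref{MTag}.

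The heart of the matter is the equivalence a)$\Leftrightarrow$b), and here the decisive new feature is that for a field the value set $vK$ is itself an ordered abelian group. The map $\gamma\mapsto-\gamma$ is therefore an order-reversing bijection of $vK$, which interchanges its top and bottom ends and hence shows $\cf(vK)=\ci(vK)$. Consequently, once $vK$ is known to be strongly symmetrically complete (which holds whenever $K$ is symmetrically complete, by the first part), the single condition ``$\cf(vK)$ uncountable'' already forces ``$\ci(vK)$ uncountable'', so it is equivalent to ``$vK$ is extremely symmetrically complete''. Feeding this into the two characterizations supplied by Theorem \ref{MTag} — $K$ strongly symmetrically complete iff symmetrically complete with $\cf(vK)$ uncountable, and $K$ extremely symmetrically complete iff symmetrically complete with $vK$ extremely symmetrically complete — gives a)$\Leftrightarrow$b) at once.

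Finally, b)$\Leftrightarrow$c) is a matter of unwinding definitions: by Theorem \ref{MTag}, $K$ is extremely symmetrically complete iff it is symmetrically complete and $vK$ is extremely symmetrically complete, and substituting the first equivalence of the present theorem (symmetric completeness being spherical completeness w.r.t.\ $v$ together with residue field $\R$ and strongly symmetrically complete $vK$) turns this into exactly condition c), since extreme symmetric completeness of $vK$ subsumes strong symmetric completeness. I expect the only genuinely delicate point to be the bookkeeping around trivial or small value groups (e.g.\ $K=\R$, where $vK=\{0\}$): one must check that the vacuous strong symmetric completeness of $vK$ is compatible with $\R$ being symmetrically complete but not strongly so, which is precisely what the cofinality clause records.
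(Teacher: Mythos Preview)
Your proposal is correct and follows essentially the same route as the paper: reduce to Theorem~\ref{MTag} via the additive group, identify all archimedean components with the residue field by multiplication with an element of the right value, and use $\cf(vK)=\ci(vK)$ (since $vK$ is an ordered abelian group) to collapse the strong/extreme distinction on $vK$. The paper orders the three-way equivalence as b)$\Leftrightarrow$c) then a)$\Leftrightarrow$c) rather than your a)$\Leftrightarrow$b) then b)$\Leftrightarrow$c), but the content is identical.
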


\begin{corollary}                           \label{HP,PSF}
Every symmetrically complete ordered abelian group is divisible and
isomorphic to a Hahn product. Every symmetrically complete ordered
field is real closed and isomorphic to a power series field with
residue field $\R$ and divisible value group.
\end{corollary}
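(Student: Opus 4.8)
The plan is to read off both assertions from Theorems~\ref{MTag} and~\ref{MTf} together with the structural facts recalled in the introduction, the key manoeuvre being to apply the abelian-group statement to the value group in order to settle the field case.

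For the ordered abelian group $G$ I would start from Theorem~\ref{MTag}: symmetric completeness gives that $G$ is spherically complete w.r.t.\ its natural valuation $v$ and that every archimedean component $\mathcal{C}_\gamma$ is isomorphic to $\R$. By the characterization of the ordered abelian groups that are spherically complete w.r.t.\ their natural valuation recalled above (see \cite{[KU4]} or \cite{[KS]}), $G$ is then isomorphic to the Hahn product $\pH_{\gamma\in vG}\mathcal{C}_\gamma$. To obtain divisibility I would simply divide componentwise: for $x$ in the Hahn product and $n\in\N$, the element whose $\gamma$-component is $x_\gamma/n$ has the same support as $x$ and hence again lies in the product, so $nG=G$. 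This settles the first sentence of the corollary.

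For the ordered field $K$, Theorem~\ref{MTf} supplies that $K$ is spherically complete w.r.t.\ its natural valuation $v$, has residue field $\R$, and has a strongly symmetrically complete value group $vK$. The first step is the reduction: every cut of the linearly ordered set $vK$ is strongly asymmetric, hence asymmetric, so $vK$ is a symmetrically complete ordered abelian group, and the already-proved first assertion shows that $vK$ is divisible. Next, since $K$ is spherically complete w.r.t.\ $v$ it is maximal in the sense of \cite{[Ka]}; as the residue field $\R$ has characteristic $0$, no factor system is needed and Kaplansky's theorem identifies $K$ with the power series field over $\R$ having value group $vK$.

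Finally I would invoke the classical fact that a power series field over a real closed field with divisible value group is itself real closed; equivalently, $K$ is henselian with real closed residue field $\R$ and divisible value group $vK$, whence $K$ is real closed. I do not expect a genuine obstacle here, since the argument is essentially assembly and the only conceptual step is the reduction of the divisibility of $vK$ to the group case via Theorem~\ref{MTag}. The two remaining ingredients---divisibility of a Hahn product with components $\R$, and real closedness of a power series field with divisible value group---are respectively routine and classical.
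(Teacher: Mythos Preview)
Your proposal is correct and follows essentially the same route as the paper: you invoke Theorem~\ref{MTag} together with the Hahn product characterization of spherically complete groups and componentwise divisibility for the group case, then for the field case you use Theorem~\ref{MTf}, apply the group result to $vK$ to get divisibility, invoke Kaplansky for the power series representation, and use the henselian criterion for real closedness. The paper's proof is just a terser version of exactly this, citing \cite[Theorem (8.6)]{[P]} for the final step.
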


These results show a way for the construction of symmetrically complete
and extremely symmetrically complete ordered fields $K$, which is an
alternative to the construction given in \cite{[S]}. For the former,
construct a strongly symmetrically complete linearly ordered set $I$
with uncountable coinitiality. Then take $G$ to be the Hahn product with
index set $I$ and all archimedean components equal to $\R$. Finally,
take $K=\R((G))$, the power series field with coefficients in $\R$ and
exponents in $G$. To obtain an extremely symmetrically complete ordered
field $K$, construct $I$ such that in addition, also its cofinality is
uncountable. See Section~\ref{sectcon} for details.

\pars
In Section~\ref{sectemb}, we will use our theorems to prove the
following result, which extends the corresponding result of \cite{[S]}:

\begin{theorem}                             \label{ext}
Every ordered abelian group can be extended to an extremely
symmetrically complete ordered abelian group. Every ordered field can be
extended to an extremely symmetrically complete ordered field.
\end{theorem}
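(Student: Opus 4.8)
The plan is to prove Theorem~\ref{ext} by combining the structural characterizations from Theorems~\ref{MTag} and~\ref{MTf} with a construction of suitable linearly ordered index sets. The key reduction is this: by Theorem~\ref{MTag}, to build an extremely symmetrically complete ordered abelian group it suffices to produce an extremely symmetrically complete linearly ordered set $I$, for then the Hahn product $H:=\pH_{i\in I}\R$ with all archimedean components equal to $\R$ is spherically complete w.r.t.\ its natural valuation (being a Hahn product), has value set $I$, and has all archimedean components isomorphic to $\R$; the ``extreme'' clause of Theorem~\ref{MTag} then makes $H$ extremely symmetrically complete. Likewise, by the equivalence (b)$\Leftrightarrow$(c) in Theorem~\ref{MTf} together with Corollary~\ref{HP,PSF}, the power series field $K:=\R((H))$ is extremely symmetrically complete as soon as its value group $H$ is. So the entire theorem rests on the purely order-theoretic task of constructing, for a \emph{prescribed} cardinality demand, an extremely symmetrically complete linearly ordered set.

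First I would address the embedding rather than mere existence, since the theorem asserts that an \emph{arbitrary given} ordered abelian group $G_0$ (resp.\ field $K_0$) can be extended. The natural route is: embed $G_0$ into a Hahn product $\pH_{i\in I}\R$ over a large enough index set $I$ that carries the required cut structure. Concretely, I would first pass to the value set $vG_0$ and the archimedean components of $G_0$, enlarge the index set to some linearly ordered $I\supseteq vG_0$ that is extremely symmetrically complete, and enlarge every archimedean component to $\R$; the embedding of $G_0$ into $\pH_{i\in I}\R$ is then induced by choosing compatible embeddings of the components into $\R$ (using Hölder's theorem, already invoked in the excerpt) and by the functoriality of the Hahn product in its index set. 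For fields, $K_0$ embeds into a spherically complete (maximal) immediate extension, which by Kaplansky's theorem (cited in the excerpt) is a power series field $\R((H))$; one then enlarges its value group $H=vK_0$-extension to an extremely symmetrically complete group as in the abelian case, and embeds.

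The genuine mathematical content, and the step I expect to be the main obstacle, is the construction of an extremely symmetrically complete linearly ordered set \emph{containing a prescribed ordered set} $I_0$ as a suborder. This is exactly the ``detailed study of the cofinalities in cuts'' advertised in the abstract, and it is where the delicate cardinal arithmetic lives: one must fill in, by some iterated completion or ultrapower-like saturation, every cut so that its cofinality pair $(\kappa,\lambda)$ becomes strongly asymmetric ($\kappa\ne\lambda$ with one of them uncountable) while \emph{simultaneously} keeping the cofinality and coinitiality of the whole set uncountable, and — crucially — without reintroducing symmetric cuts at later stages of the construction. The difficulty is that naively realizing cuts to make them asymmetric tends to create new cuts whose two sides have equal cofinality; controlling this requires choosing the cofinalities deliberately at each step (typically alternating or increasing regular cardinals along a sufficiently long well-ordered iteration, with careful bookkeeping on the cofinalities that can arise in a limit). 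I would carry this out by a transfinite construction of length equal to a suitably chosen regular cardinal, building an increasing chain $I_0\subseteq I_1\subseteq\cdots$, at each successor stage inserting points to resolve the ``most symmetric'' remaining cut with prescribed asymmetric cofinalities, and verifying at the end that the resulting set is extremely symmetrically complete by a cofinality computation on cuts of the union. Once this order-theoretic construction is in hand, the passage to groups and fields via Hahn products, power series fields, and Theorems~\ref{MTag}--\ref{MTf} and Corollary~\ref{HP,PSF} is routine.
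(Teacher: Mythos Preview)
Your reduction via Theorems~\ref{MTag} and~\ref{MTf} to the purely order-theoretic problem, and the subsequent passage through Hahn products and power series fields, matches the paper's proof exactly. The difference lies in the order-theoretic construction itself. The paper does \emph{not} iterate a cut-filling procedure; instead it builds the extremely symmetrically complete extension $J\supseteq I$ in one shot as a lexicographic product $\prod_{\nu<\mu} I_\nu$ over a regular cardinal $\mu$, with $I_0=\lambda_0^*+I^c+\kappa_0$ and each later factor of the form $\lambda_\nu^*+\kappa_\nu$ for carefully chosen regular cardinals. The cofinalities of all cuts in such a product can be computed explicitly in terms of the $\kappa_\nu,\lambda_\nu$, and one simply chooses these so that the resulting list contains no symmetric pair (Corollary following Theorem~15 in Section~\ref{sectcon}). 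This avoids precisely the difficulty you flag, namely that filling one cut may create new symmetric cuts at a later stage; in the lexicographic construction all cuts are born simultaneously with controlled cofinalities.

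Your proposed transfinite iterated approach is in spirit the original construction of \cite{[S]} and should work, but the ``careful bookkeeping'' you allude to is nontrivial and is left entirely unexecuted in your sketch; as written it is a plan rather than a proof. One small slip on the field side: the maximal immediate extension of $K_0$ has the \emph{same} residue field as $K_0$, which need not be $\R$ but only a subfield of $\R$; the paper handles this by first passing to the real closure of $K_0$ (and then embedding in $\R((G))$), which you should do as well.
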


For the proof of this theorem, we need to extend any given ordered set
$I$ to an extremely symmetrically complete ordered set $J$. We do this
by constructing suitable lexicographic products of ordered sets. Let us
describe the most refined result that we achieve, which gives us the
best control of the cofinalities of cuts in the constructed ordered set
$J$.

We denote by $\Reg$ the class of all infinite regular cardinals, and for
any ordinal $\lambda$, by
\[
\Reg_{<\lambda} \>=\> \{\kappa<\lambda\mid \aleph_0\leq\kappa =
\cf(\kappa)\}
\]
the set of all infinite regular cardinals $<\lambda$. We define:
\begin{eqnarray*}
\Coin (I)&:=&\{\ci(S)\mid S\subseteq I \mbox{ such that }\ci(S)
\mbox{ is infinite}\}\>\subset\>\Reg\,,\\
\Cofin (I)&:=&\{\cf(S)\mid S\subseteq I \mbox{ such that }\cf(S)
\mbox{ is infinite}\}\>\subset\>\Reg\,.
\end{eqnarray*}

We choose any $\mu,\kappa_0,\lambda_0\in\Reg$. Then we set
\begin{eqnarray*}
\Rl & := & \Cofin (I)\cup\Reg_{<\kappa_0}\cup\Reg_{<\mu}\>\subset\>
\Reg\>,\\
\Rr & := & \Coin (I)\>\cup \Reg_{<\lambda_0}\cup \Reg_{<\mu}\>\subset\>
\Reg\>.
\end{eqnarray*}
All of the subsets we have defined here are initial segments of $\Reg$
in the sense that if they contain $\kappa$, then they also contain every
infinite regular cardinal $<\kappa$.

\pars
Further, we assume that functions
\[
\phil: \{1\}\cup\Reg\rightarrow\Reg\;\mbox{ and }\;\phir:
\{1\}\cup\Reg\rightarrow\Reg
\]
are given.
We prove in Section~\ref{sectcon}:

\begin{theorem}                             \label{IextJ}
Assume that $\mu$ is uncountable and that
\begin{equation}                            \label{phi}
\phil(\{1\}\cup\Rr)\subset\Rl\;\mbox{ and }\;\phir(\{1\}\cup\Rl)\subset\Rr
\end{equation}
with $\phil(\kappa)\ne\kappa\ne\phir(\kappa)$ for all $\kappa\in\Rl\cup
\Rr$. Then $I$ can be extended to a strongly symmetrically complete
ordered set $J$ of cofinality $\kappa_0$ and coinitiality $\lambda_0\,$,
in which the cuts have the following cofinalities:
\[
\{(1,\mu),(\mu,1)\} \,\cup\, \{(\kappa,\varphi(\kappa))\mid \kappa\in
\Rl\} \,\cup\, \{(\varphi(\lambda),\lambda)\mid\lambda\in \Rr\}\>.
\]
If in addition $\kappa_0$ and $\lambda_0$ are uncountable, then $J$ is
extremely symmetrically complete.
\end{theorem}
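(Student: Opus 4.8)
The plan is to build $J$ as a transfinitely iterated lexicographic product of ordered sets assembled from $I$ together with ordinals and reversed ordinals prescribed by $\mu,\kappa_0,\lambda_0$ and the pairing functions $\phil,\phir$, and then to read off the cofinalities of all cuts by a cut-calculus for such products. The mechanism producing the two principal-type cofinalities is to flank every element on its left by an increasing sequence of cofinality $\mu$ and on its right by a decreasing sequence of coinitiality $\mu$; the cut just below the element is then $(\mu,1)$ and the cut just above it is $(1,\mu)$, and it is here that the hypothesis ``$\mu$ uncountable'' enters, making these two cuts strongly asymmetric and ensuring that no element of $J$ has an immediate neighbour. The most significant coordinate is a spine along which the copy of $I$ is inserted as a family of blocks --- this yields the required order embedding $I\hookrightarrow J$ --- and which is capped above and below by an increasing $\kappa_0$-sequence and a decreasing $\lambda_0$-sequence of blocks, so that $\cf(J)=\kappa_0$ and $\ci(J)=\lambda_0$.

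The technical core is a lemma describing the cuts of a single lexicographic product $A\times_{\mathrm{lex}}B$, with $A$ the more significant factor. Every cut of the product either splits one block $\{a\}\times B$, in which case it is inherited from a cut of $B$ and has that cut's cofinality, or it separates whole blocks and is governed by a cut $(D_A,E_A)$ of $A$: its left cofinality is $\cf(B)$ if $D_A$ has a last element and $\cf(D_A)$ otherwise, and dually its right coinitiality is $\ci(B)$ if $E_A$ has a first element and $\ci(E_A)$ otherwise. Applying this description level by level through the iterated product gives an inductive grip on the entire set of cut cofinalities, with the sets $\Cofin$ and $\Coin$ of the factors propagating in a controlled way.

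The functions $\phil,\phir$ enter as the rule pairing the two sides of every non-principal cut: whenever a cut with left cofinality $\kappa\in\Rl$ is created, a reversed $\phir(\kappa)$-sequence (itself recursively fattened) is glued on its right, giving the cut cofinality $(\kappa,\phir(\kappa))$; dually a $\phil(\lambda)$-sequence is glued to the left of any cut of right coinitiality $\lambda\in\Rr$, giving $(\phil(\lambda),\lambda)$. The hypothesis $\phil(\kappa)\ne\kappa\ne\phir(\kappa)$ makes each such cut asymmetric, and since two distinct infinite regular cardinals cannot both equal $\aleph_0$, at least one side is uncountable, so the cut is strongly asymmetric. The role of the closure conditions (\ref{phi}) together with the initial-segment property of $\Rl$ and $\Rr$ is that this gluing closes up: a freshly glued sequence creates new cuts, but their left cofinalities lie again in $\Rl$ and their right coinitialities again in $\Rr$ --- because $\phir$ maps $\{1\}\cup\Rl$ into $\Rr$, $\phil$ maps $\{1\}\cup\Rr$ into $\Rl$, and every infinite regular cardinal below one already present is present --- so they fall under the same rule and contribute only cofinalities already on the displayed list. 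Here $\Cofin(I)\subset\Rl$ and $\Coin(I)\subset\Rr$ are exactly what makes the cuts inherited from the embedded copy of $I$ obey this rule, while $\Reg_{<\kappa_0}\subset\Rl$ and $\Reg_{<\lambda_0}\subset\Rr$ handle the cuts arising along the capping sequences that fix the cofinality and coinitiality of $J$.

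Combining these, the cut-calculus together with an induction on the levels of the product should show both that the displayed list is complete and that each of its entries is realized, whence $J$ is strongly symmetrically complete; if in addition $\kappa_0$ and $\lambda_0$ are uncountable, then $\cf(J)$ and $\ci(J)$ are uncountable as well and $J$ is extremely symmetrically complete. I expect the main obstacle to be the completeness half of the computation, specifically the cuts of $J$ that are localized in no single block at any bounded stage of the iteration --- those lying at limit levels --- for which one must rule out any cofinality outside the prescribed families. This is precisely where (\ref{phi}) and the initial-segment property are indispensable, and it is the bookkeeping that the iterated lexicographic-product framework is designed to keep under control.
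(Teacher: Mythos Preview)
Your proposal is correct and follows essentially the same approach as the paper: a lexicographic product indexed by $\mu$, with base level $\lambda_0^*+I^c+\kappa_0$ and with each subsequent factor at level $\nu$ chosen depending on the truncation $(\alpha_\rho)_{\rho<\nu}$ to be of the form $\phir(\kappa_a)^*+\mu+\{\zero\}+\mu^*+\phil(\lambda_a)$, followed by an inductive cut-calculus exactly of the kind you describe. Your identification of the limit-level cuts as the delicate point is also on target; there the cofinality $\mu'=\cf(\nu)<\mu$ lands in $\Reg_{<\mu}\subset\Rl\cap\Rr$, which is precisely why that term appears in the definitions of $\Rl$ and $\Rr$ and why the closure conditions~(\ref{phi}) then apply.
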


\parm
Among the value groups of valued fields, not only the dense, but also
the discretely ordered groups play an important role. The value groups
of formally $p$-adic fields are discretely ordered, and the value groups
of $p$-adically closed fields are $\Z$-groups, that is, ordered abelian
groups $G$ that admit (an isomorphic image of) $\Z$ as a convex subgroup
such that $G/\Z$ is divisible. We wish to prove a version of the
previous theorem for discretely ordered abelian groups. Note that in a
discretely ordered group, every principal cut has cofinality $(1,1)$. We
call an ordered abelian group $G$ \bfind{symmetrically d-complete} if
every nonprincipal cut in $G$ is asymmetric. We will call it
\bfind{extremely symmetrically d-complete} if in addition, $G$ has
uncountable cofinality (and hence also uncountable coinitiality).
Note that if a nonprincipal cut is asymmetric, then it is
strongly asymmetric because the only countable coinitiality/cofinality
other than 1 is $\aleph_0\,$. So ``symmetrically d-complete'' is at the
same time the discrete version of ``strongly symmetrically complete''.

\begin{theorem}                             \label{MTagd}
For a discretely ordered abelian group $(G,<)$, the following are
equivalent:
\sn
a) \ $(G,<)$ is symmetrically d-complete,
\sn
b) \ $(G,<)$ is spherically complete w.r.t.\ the order balls,
\sn
c) \ $(G,<)$ is a $\Z$-group such that $G/\Z$ is strongly
symmetrically complete.
\sn
%
%
%
Further, $(G,<)$ is extremely symmetrically d-complete if and only if
$G/\Z$ is extremely symmetrically complete.
%
\end{theorem}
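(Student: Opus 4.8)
The plan is to dispose of the equivalence of (a) and (b) at once and then reduce the whole statement to a correspondence between the nonprincipal cuts of $G$ and the cuts of $G/\Z$. Since symmetric d-completeness is \emph{by definition} the assertion that every nonprincipal cut of $G$ is asymmetric, (a)$\Leftrightarrow$(b) is immediate from Proposition~\ref{scobcas} applied to the underlying ordered set of $G$. For the rest I fix the least positive element, write $\Z=\langle 1\rangle$, and first observe that $\Z$ is automatically convex: if $0\le a\le n$ with $n\in\Z$ and $a\notin\Z$, subtracting $1$ repeatedly produces an element strictly between $0$ and $1$, contradicting minimality. Hence $G/\Z$ is a well-defined ordered abelian group, the cosets $g+\Z$ are convex and order-isomorphic to $\Z$, and I have a projection $\pi\colon G\to G/\Z$.

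The core is a cut-correspondence lemma: a nonprincipal cut $C=(D,E)$ of $G$ never splits a coset. Since the cosets are convex and $D<E$, at most one coset $g+\Z$ can meet both $D$ and $E$; but a cut of $g+\Z\isom\Z$ is principal, its lower part has a maximum $m$, and any $d\in D$ with $d>m$ must lie in a strictly higher coset and hence exceed elements of $E$, forcing $m=\max D$ and contradicting nonprincipality. Thus every nonprincipal $C$ induces a genuine Dedekind cut $\bar C=(\pi(D),\pi(E))$ of $G/\Z$, and conversely every cut of $G/\Z$ pulls back under $\pi^{-1}$ to a nonprincipal cut of $G$. I then record the cofinality rules: $\cf(D)=\aleph_0$ when $\pi(D)$ has a maximum (its top coset is cofinal in $D$ and is $\isom\Z$), and $\cf(D)=\cf(\pi(D))$ otherwise (a cofinal family lifts, and projects back, because $\pi$ is order preserving and $\pi(D)$ then has no maximum); symmetrically for $\ci(E)$ and $\pi(E)$.

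With these rules both remaining implications become bookkeeping over the positions of $\bar C$ (whether $\pi(D)$ has a maximum and $\pi(E)$ a minimum). For (c)$\Rightarrow$(a): strong symmetric completeness of $G/\Z$ makes it dense and makes each $\bar C$ strongly asymmetric, and feeding the rules through the cases yields $\cf(D)\ne\ci(E)$; in the principal positions this uses precisely that the principal cut $(1,\ci(\pi(E)))$ at a point forces $\ci(\pi(E))$ uncountable. For (a)$\Rightarrow$(c): every cut $\bar C$ of $G/\Z$ lifts to a nonprincipal, hence asymmetric, cut $C$, and the rules transfer $\cf(D)\ne\ci(E)$ back to $\cf(\pi(D))\ne\ci(\pi(E))$. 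The decisive point is that this automatically \emph{upgrades} to strong asymmetry: two distinct infinite regular cardinals force the larger to be uncountable, and in the principal positions the side not pinned to $\aleph_0$ must itself be uncountable. The simultaneous case (both a maximum and a minimum) would give $\cf(D)=\ci(E)=\aleph_0$, contradicting asymmetry; this is exactly how density of $G/\Z$ is derived. Finally, $G/\Z$ strongly symmetrically complete is in particular symmetrically complete, so Corollary~\ref{HP,PSF} makes it divisible, i.e. $G$ is a $\Z$-group, completing (c).

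For the addendum I would note that, $G/\Z$ being nontrivial and unbounded, the same lifting argument gives $\cf(G)=\cf(G/\Z)$, and the symmetry $x\mapsto -x$ gives $\ci(G)=\cf(G)$ and $\ci(G/\Z)=\cf(G/\Z)$; hence $G$ has uncountable cofinality iff $G/\Z$ has uncountable cofinality and coinitiality, which with the established equivalence is exactly extreme symmetric completeness of $G/\Z$. I expect the main obstacle to be the case analysis underlying the cut correspondence: verifying rigorously that a nonprincipal cut cannot split a coset, and that the principal positions yield precisely cofinality $\aleph_0$. This is where discreteness is genuinely used, and a mishandled ``top coset'' computation would break the exact matching of cofinalities on which both implications depend.
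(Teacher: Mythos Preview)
Your proposal is correct and follows essentially the same route as the paper: (a)$\Leftrightarrow$(b) via Proposition~\ref{scobcas}, then the correspondence between nonprincipal cuts of $G$ and Dedekind cuts of $G/\Z$ together with the cofinality rule (your case split on whether $\pi(D)$ has a maximum is exactly the paper's formula $\cf(D)=\max\{\cf(\bar D),\aleph_0\}$). Two small remarks: you make the divisibility of $G/\Z$ explicit via Corollary~\ref{HP,PSF}, which the paper leaves implicit; and for the addendum the paper first disposes of the case $G\cong\Z$ before asserting $\cf(G)=\cf(G/\Z)$, a trivial case you should also record rather than assume away.
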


Again, this shows a way of construction. To obtain a symmetrically
d-complete discretely ordered abelian group $G$, construct a
strongly symmetrically complete ordered abelian group $H$ and then take
the lexicographic product $H\times\Z$. If in addition the cofinality of
$H$ is uncountable, then $G$ will even be extremely symmetrically
d-complete.

\pars
Note that $G$ is isomorphic to a Hahn product if and only if $G/\Z$ is.
Therefore, if $G$ is symmetrically d-complete, then it is a Hahn
product.

\parm
\begin{remark}
After the completion of this paper it was brought to our attention by
Salma Kuhlmann that Hausdorff constructed already in the years 1906-8
ordered sets with prescribed cofinalities for all of its cuts (cf.\
\cite{[Hd]}). His construction also yields extremely symmetrically
complete ordered sets. However, we are convinced that the constructions
we present in Section~\ref{sectcon} are indispensable in the context of
this paper (and a good service to the reader), for the following
reasons:
\sn
$\bullet$ \ they represent a shortcut to the result we need, whereas the
construction of Hausdorff is complicated and spread over several
sections of the long paper \cite{[Hd]}; moreover, it is written in
German and in a somewhat oldfashioned notation that is not always the
most elegant (according to our ``modern'' standards);
\sn
$\bullet$ \ Hausdorff does not construct the ordered sets so that they
extend a given ordered set; convincing the reader that his
construction can be adapted to accommodate this additional condition
would essentially take the same effort as a driect proof.
\end{remark}

%
%
\section{Preliminaries and notations}                  \label{sectprel}
%
%
%
\subsection{Proof of Proposition~\ref{scobcas}}
A \bfind{quasicut} in a linearly ordered set $I$ is a pair $C=(D,E)$
of subsets $D$ and $E$ of $I$ such that $I=D\cup E$ and $d\leq e$ for
all $d\in D$, $e\in E$. In this case, $D\cap E$ is empty or a singleton;
if it is empty, then $(D,E)$ is a cut.

\pars
Assume that every nonprincipal cut in the linearly ordered set $I$ is
asymmetric. Every nonempty chain of closed bounded intervals has a
cofinal subchain $([d_\nu,e_\nu])_{\nu<\mu}$ indexed by a regular
cardinal $\mu$. We set $D:=\{d\in I\mid d\leq d_\nu\mbox{ for some }
\nu<\mu\}$ and $E:=\{e\in I\mid e\geq e_\nu\mbox{ for some }\nu<\mu\}$.
Then $d\leq e$ for all $d\in D$ and $e\in E$. If $D\cap E\ne\emptyset$,
then $(D,E)$ is a quasicut and the unique element of $D\cap E$ lies in
the intersection of the chain. If $(D,E)$ is a cut, then because of
$\cf(D)=\mu=\ci(E)$ it must be principal, i.e., $\mu=1$ and
$\{d_0,e_0\}=[d_0,e_0]$ is contained in the intersection of the chain.
If $D\cap E=\emptyset$ but $(D,E)$ is not a cut, then the set $\{c\in
I\mid d<c<e\mbox{ for all }d\in D,\,e\in E\}$ is nonempty and contained
in the intersection of the chain. So in all cases, the intersection of
the chain is nonempty.

Now assume that $I$ is spherically complete w.r.t.\ the order balls.
Suppose that $(D,E)$ is a cut with $\kappa:=\cf(D)=\ci(E)$. Then we
can choose a cofinal sequence $(d_\nu)_{\nu<\kappa}$ in $D$ and a
coinitial sequence $(e_\nu)_{\nu<\kappa}$ in $E$. By assumption, the
descending chain $([d_\nu\,,\,e_\nu])_{\nu<\kappa}$ of intervals has
nonempty intersection. But this is only possible if $\kappa=1$, which
implies that $(D,E)$ is principal. This proves that every
nonprincipal cut is asymmetric.

%
%
\subsection{Hahn products}                  \label{secthahn}
Given a linearly ordered index set $I$ and for every $\gamma\in I$ an
arbitrary abelian group $C_\gamma\,$, we define a group called the
\bfind{Hahn product}, denoted by $\pH_{\gamma\in I} C_\gamma\,$.
Consider the product $\prod_{\gamma\in I} C_\gamma$ and an element $c=
(c_\gamma)_{\gamma \in I}$ of this group. Then the \bfind{support} of
$c$ is the set $\supp c:=\{\gamma\in I\mid c_\gamma \not=0\}$. As a
set, the Hahn product is the subset of $\prod_{\gamma\in I} C_\gamma$
containing all elements whose support is a wellordered subset of $I$,
that is, every nonempty subset of the support has a minimal element). In
particular, the support of every nonzero element $c$ in the Hahn product
has a minimal element $\gamma_0\,$, which enables us to define a group
valuation by setting $vc=\gamma_0$ and $v0=\infty$. The Hahn product is
a subgroup of the product group. Indeed, the support of the sum of two
elements is contained in the union of their supports, and the union of
two wellordered sets is again wellordered.

We leave it to the reader to show that a Hahn product is divisible if
and only if all of its components are.

If the components $C_\gamma$ are (not necessarily archimedean) ordered
abelian groups, we obtain the \bfind{ordered Hahn product}, also called
\bfind{lexicographic product}, where the ordering is defined as
follows. Given a nonzero element $c= (c_\gamma)_{\gamma \in I}\,$, let
$\gamma_0$ be the minimal element of its support. Then we take $c>0$ if
and only if $c_{\gamma_0}>0$. If all $C_\gamma$ are archimedean
ordered, then the valuation $v$ of the Hahn product coincides with the
natural valuation of the ordered Hahn product. Every ordered abelian
group $G$ can be embedded in the Hahn product with its set of
archimedean classes as index sets and its archimedean components as
components. Then $G$ is spherically complete w.r.t.\ the ultrametric
balls if and only if the embedding is onto.

%
%
\subsection{Some facts about cofinalities and
coinitialities}                                   \label{sectcoco}
Take a nontrivial ordered abelian group $G$ and define
\[
G^{>0}\>:=\>\{g\in G\mid g>0\}\;\;\mbox{ and }\;\;
G^{<0}\>:=\>\{g\in G\mid g<0\}\>.
\]
Since $G\ni g\mapsto -g\in G$ is an order inverting bijection,
\[
\ci(G)\>=\>\cf(G)\;\;\mbox{ and }\;\;\cf(G^{<0})\>=\>\ci(G^{>0})\>.
\]

Further, we have:
\begin{lemma}                               \label{gco}
1) \ The cofinality of\/ $G$ is equal to $\max\{\aleph_0,\ci(vG)\}$.
Hence it is uncountable if and only if the coinitiality of $vG$ is
uncountable.
\sn
2) \ If\/ $G$ is discretely ordered, then $\ci(G^{>0})=\cf(vG)=1$.
Otherwise, $\ci(G^{>0})=\max\{\aleph_0,\cf(vG)\}$.
\sn
3) \ Take $\gamma\in vG$, not the largest element of $vG$, and let
$\kappa$ be the coinitiality of the set $\{\delta\in vG\mid\delta>
\gamma\}$. Then $\cf({\cal M}_\gamma)=\max\{\aleph_0,\kappa\}$.
\end{lemma}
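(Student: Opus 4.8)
The plan is to exploit the single structural fact underlying all three parts: the natural valuation reverses size, so that $0<a<b$ forces $va\geq vb$, while $va>vb$ with $a,b>0$ forces $a\ll b$, hence $na<b$ for every $n\in\N$. Thus large positive elements of $G$ carry small values and small positive elements carry large values, and each statement will be obtained by translating a cofinal (resp.\ coinitial) family on one side into a coinitial (resp.\ cofinal) family on the other, with a separate boundary treatment when one side has an extremum.

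For part 1, I would first show $\cf(G)\geq\max\{\aleph_0,\ci(vG)\}$. The bound $\cf(G)\geq\aleph_0$ is automatic since a nontrivial ordered group has no top element. For the other bound, given a cofinal family $(s_j)_{j<\mu}$ in $G$, which may be taken positive, each $\gamma\in vG$ is witnessed by some positive $g$ with $vg=\gamma$, and a cofinal $s_j>g>0$ forces $vs_j\leq\gamma$; hence $\{vs_j\}$ is coinitial in $vG$ and $\ci(vG)\leq\mu$. For the reverse inequality I would construct a cofinal family of the required size: if $\ci(vG)=\lambda$ is infinite, pick a strictly decreasing coinitial sequence $(\gamma_i)_{i<\lambda}$ in $vG$ and positive $s_i$ with $vs_i=\gamma_i$; since $vs_i<vg$ gives $s_i>g$, this family is cofinal of length $\lambda$. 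If instead $\ci(vG)=1$, so $vG$ has a least element $\gamma_{\min}$, the largest archimedean class sits at $\gamma_{\min}$ and is a nontrivial subgroup of $\R$ by H\"older, whose cofinality is $\aleph_0$; lifting an $\aleph_0$-cofinal sequence there yields a cofinal sequence in $G$ of length $\aleph_0$.

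Part 2 is the mirror image, with the roles of cofinality and coinitiality, and of top and bottom, interchanged. In the discrete case there is a least positive element, giving $\ci(G^{>0})=1$, and that element lies in the class of the largest value, so its value is the maximum of $vG$ and $\cf(vG)=1$. In the dense case I would show $\ci(G^{>0})\geq\aleph_0$, since the positive cone then has no least element, and $\ci(G^{>0})\geq\cf(vG)$ by the same witnessing argument: a coinitial positive family $(s_j)$ forces, for each $\gamma\in vG$, some $vs_j\geq\gamma$, so $\{vs_j\}$ is cofinal in $vG$. The matching construction lifts a cofinal sequence of values $(\gamma_i)$ in $vG$; when $\cf(vG)=1$ the top archimedean component is now densely ordered, else $G$ would have a least positive element, hence again a nontrivial subgroup of $\R$, delivering the required $\aleph_0$.

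Part 3 I would reduce directly to part 1. The set $\mathcal{M}_\gamma$ is a convex subgroup of $G$, and since $\gamma$ is not the largest element of $vG$ it is nontrivial. Archimedean equivalence inside $\mathcal{M}_\gamma$ agrees with that in $G$, so the natural valuation of the ordered group $\mathcal{M}_\gamma$ is the restriction of $v$, and its value set is order-isomorphic to $\{\delta\in vG\mid\delta>\gamma\}$, whose coinitiality is $\kappa$ by hypothesis. Applying part 1 to $\mathcal{M}_\gamma$ then gives $\cf(\mathcal{M}_\gamma)=\max\{\aleph_0,\kappa\}$. The only delicate point throughout is precisely the boundary case where one side has a least or greatest element, i.e.\ coinitiality or cofinality $1$: there the cardinal $\aleph_0$ must be produced from the relevant top or bottom archimedean component rather than from $vG$, and one must check that this component is nontrivial and has the right density so that, as a subgroup of $\R$, it contributes exactly $\aleph_0$.
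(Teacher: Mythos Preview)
Your argument is correct and, for parts 1 and 2, is essentially the paper's: both directions are obtained by lifting a coinitial (resp.\ cofinal) family of values to positive elements and conversely, and the boundary case where $vG$ has a least or greatest element is handled by extracting $\aleph_0$ from the extremal archimedean class. The paper does this via the explicit sequence $(ng)_{n\in\N}$ for a positive $g$ of extremal value, while you invoke the H\"older embedding of the component into $\R$; these are the same idea phrased differently. (A small terminological slip: what embeds in $\R$ is the archimedean \emph{component} $\mathcal{C}_{\gamma_{\min}}$, not the archimedean class, but your lifting argument goes through since the quotient by a convex subgroup preserves strict inequalities.)

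The one genuine economy is in part~3. The paper repeats the proof of part~1 for $\mathcal{M}_\gamma$ line by line, distinguishing again whether $v\mathcal{M}_\gamma$ has a least element. You instead observe that $\mathcal{M}_\gamma$ is itself a nontrivial ordered abelian group whose natural valuation is the restriction of $v$ and whose value set is precisely $\{\delta\in vG\mid \delta>\gamma\}$, so part~1 applies directly and gives $\cf(\mathcal{M}_\gamma)=\max\{\aleph_0,\kappa\}$ at once. This buys brevity and makes the uniformity of the three statements explicit, at no cost in generality.
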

\begin{proof}
1): \
 Since a nontrivial
ordered abelian group has no maximal element, its cofinality is at least
$\aleph_0\,$. If $vG$ has a smallest element, then take a positive $g\in
G$ whose value is this smallest element. Then the sequence
$(ng)_{n\in\N}$ is cofinal in $G$, so its cofinality is $\aleph_0\,$.

If $\kappa:=\ci(vG)$ is infinite, then take a sequence
$(\gamma_\nu)_{\nu<\kappa}$ which is coinitial in $vG$, and take
positive elements $g_\nu\in G$, $\nu<\kappa$, with
$vg_\nu=\gamma_\nu\,$. Then the sequence $(g_\nu)_{\nu<\kappa}$ is
cofinal in $G$ and therefore, $\cf(G)\leq\ci(vG)$. On the other hand,
for every sequence $(g_\nu)_{\nu<\lambda}$ cofinal in $G$, the sequence
of values $(vg_\nu)_{\nu<\lambda}$ must be coinitial in $vG$, which
shows that $\cf(G)\geq\ci(vG)$.

\sn
2) \ If\/ $G$ is discretely ordered, then it has a smallest positive
element $g$ and hence, $\ci(G^{>0})=1$. Further, $vg$ must be the
largest element of $vG$, so $\cf(vG)=1$.

If $G$ is not discretely, hence densely ordered, then the coinitiality
of $G^{>0}$ is at least $\aleph_0\,$. If $vG$ has a largest element
$\gamma$, then we take a positive $g\in G$ with $vg=\gamma$. Then
${\cal M}_\gamma=\{0\}$ and ${\cal O}_\gamma$ is an archimedean
ordered convex subgroup of $G$. This implies that $\ci(G^{>0})=
\ci({\cal O}_\gamma^{>0})= \aleph_0\,$.

If $\kappa:=\cf(vG)$ is infinite, then take a sequence
$(\gamma_\nu)_{\nu<\kappa}$ which is cofinal in $vG$, and take positive
elements $g_\nu\in G$, $\nu<\kappa$, with $vg_\nu=\gamma_\nu\,$. Then
the sequence $(g_\nu)_{\nu<\kappa}$ is coinitial in $G^{>0}$ and
therefore, $\ci(G^{>0})\leq\cf(vG)$. On the other hand, for every
sequence $(g_\nu)_{\nu<\lambda}$ coinitial in $G^{>0}$, the sequence of
values $(vg_\nu)_{\nu<\lambda}$ must be coinitial in $vG$, which shows
that $\ci(G^{>0})\geq\cf(vG)$.

\sn
3): \ By our condition on $\gamma$, ${\cal M}_\gamma$ is a nontrivial
subgroup of $G$ and therefore, its cofinality is at least $\aleph_0\,$.
If $v{\cal M}_\gamma=\{\delta\in vG\mid\delta> \gamma\}$ has a smallest
element, then take a positive $g\in G$ whose value is this smallest
element. Then the sequence $(ng)_{n\in\N}$ is cofinal in
${\cal M}_\gamma$, so $\cf({\cal M}_\gamma)=\aleph_0\,$.

Assume that $\kappa=\ci(v{\cal M}_\gamma)$ is infinite. Take a sequence
$(\gamma_\nu)_{\nu<\kappa}$ which is coinitial in $v{\cal M}_\gamma=
\{\delta\in vG\mid\delta> \gamma\}$ and take positive elements $g_\nu
\in G$, $\nu<\kappa$, with $vg_\nu=\gamma_\nu\,$. Then the sequence
$(g_\nu)_{\nu<\kappa}$ is cofinal in ${\cal M}_\gamma$ and therefore,
$\cf({\cal M}_\gamma)\leq\kappa$. On the other hand, for every sequence
$(g_\nu)_{\nu<\lambda}$ cofinal in ${\cal M}_\gamma$, the sequence of
values $(vg_\nu)_{\nu<\lambda}$ must be coinitial in $v{\cal M}_\gamma$,
which shows that $\cf({\cal M}_\gamma)\geq\kappa$.
\end{proof}

%

%
%
\section{Analysis of cuts in ordered abelian groups}
In this section, we will use the facts outlined in
Section~\ref{sectcoco} freely without further citation.

Take a cut $C=(D,E)$ with cofinality $(\kappa,\lambda)$ in the ordered
abelian group $G$. First assume that $C$ is principal. If $D$ has
largest element $g$, then the set $g+G^{>0}$ is coinitial in $E$. Hence
in this case, $C$ has cofinality $(1,\lambda)$ with $\lambda=\ci
(G^{>0})$. Symmetrically, if $E$ has smallest element $g$, then the set
$g+G^{<0}$ is cofinal in $D$. Hence in this case, $C$ has cofinality
$(\kappa,1)$ with $\kappa=\cf(G^{<0})=\ci(G^{>0})$.

If $G$ is discretely ordered, then $\ci(G^{>0})=1$ by part 2) of
Lemma~\ref{gco}. So for every principal cut to be asymmetric, it is
necessary that $G$ is not discretely, hence densely ordered. If $G$ is
densely ordered, then $\ci(G^{>0})=\max\{\aleph_0,\cf(vG)\}$. So we
obtain:

\begin{lemma}                               \label{prin}
Take any ordered abelian group $G$. Every principal cut in $G$ is
asymmetric if and only it $G$ is densely ordered. Every principal cut in
$G$ is strongly asymmetric if and only if in addition, $\cf(vG)$ is
uncountable.
\end{lemma}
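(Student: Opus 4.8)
The plan is to reduce the entire statement to the computation of the cofinality of a principal cut carried out in the paragraph immediately preceding the lemma, combined with part 2) of Lemma~\ref{gco}. Recall from that paragraph that every principal cut in $G$ has cofinality either $(1,\ci(G^{>0}))$, when its lower cut set has a largest element, or $(\ci(G^{>0}),1)$, when its upper cut set has a smallest element (here we use $\cf(G^{<0})=\ci(G^{>0})$ from Section~\ref{sectcoco}). Thus the question of asymmetry reduces to whether $\ci(G^{>0})=1$, and the question of strong asymmetry reduces to whether $\ci(G^{>0})$ is uncountable. I would also record two facts to make the ``only if'' directions legitimate: first, a nontrivial ordered abelian group is either densely ordered or discretely ordered (a smallest positive element exists precisely in the discrete case); and second, principal cuts always exist, since for any $g\in G$ the pair $(\{h\mid h\leq g\},\{h\mid h>g\})$ is a cut whose lower set has largest element $g$ and whose upper set is nonempty because $G$ has no maximum.

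For the first equivalence I would apply Lemma~\ref{gco}(2): when $G$ is discretely ordered, $\ci(G^{>0})=1$, so every principal cut has cofinality $(1,1)$ and is therefore symmetric; conversely, when $G$ is densely ordered, $\ci(G^{>0})=\max\{\aleph_0,\cf(vG)\}\geq\aleph_0\ne 1$, so every principal cut has cofinality $(1,\lambda)$ or $(\kappa,1)$ with the nontrivial entry at least $\aleph_0$, hence is asymmetric. Together with the dichotomy and the existence of principal cuts, this yields both directions.

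For the second equivalence, note that strong asymmetry implies asymmetry, so by the first part $G$ must already be densely ordered; I would then assume this. The nontrivial entry of every principal cut equals $\ci(G^{>0})=\max\{\aleph_0,\cf(vG)\}$, and since the other entry is $1$, strong asymmetry requires exactly that this entry be uncountable, which happens if and only if $\cf(vG)$ is uncountable. I do not expect any genuine obstacle here; the only point requiring care is the bookkeeping of the quantifier ``every principal cut.'' This collapses cleanly because $\ci(G^{>0})$ is a single invariant of $G$, so all principal cuts share one and the same cofinality up to transposition, and the assertion about all of them reduces to an assertion about this one cardinal.
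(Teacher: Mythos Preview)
Your proposal is correct and follows essentially the same route as the paper: the paragraph preceding the lemma computes the cofinality of a principal cut as $(1,\ci(G^{>0}))$ or $(\ci(G^{>0}),1)$, and then Lemma~\ref{gco}(2) reduces both equivalences to the value of $\ci(G^{>0})$. Your added remarks on the dense/discrete dichotomy and the existence of principal cuts make explicit what the paper leaves implicit, but the argument is the same.
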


\pars
From now on we assume that the cut $C$ in $G$ is nonprincipal. Then the
only countable cardinality that can appear as coinitiality or cofinality
is $\aleph_0\,$. This shows:

\begin{lemma}                               \label{asa}
If a nonprincipal cut is asymmetric, then it is strongly asymmetric.
\end{lemma}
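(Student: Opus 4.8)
The plan is to argue by contradiction, leaning entirely on the structural observation recorded immediately before the statement: for a nonprincipal cut, the only countable cardinal that can occur as a cofinality or coinitiality is $\aleph_0$. Once this is in hand, the lemma is almost immediate.

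First I would recall why that observation holds, so the argument is self-contained. Both $\cf(D)$ and $\ci(E)$ are regular cardinals, by the general fact recalled in the introduction. Because the cut $C=(D,E)$ is nonprincipal, $D$ has no largest element and $E$ has no smallest element; hence neither of these cardinals can equal $1$, so $\kappa=\cf(D)$ and $\lambda=\ci(E)$ are both infinite. Since the only countable infinite regular cardinal is $\aleph_0$, each of $\kappa$ and $\lambda$ is either equal to $\aleph_0$ or uncountable.

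Now assume the cut is asymmetric, so $\kappa\ne\lambda$, and suppose toward a contradiction that it fails to be strongly asymmetric, i.e.\ that both $\kappa$ and $\lambda$ are countable. By the previous step this forces $\kappa=\aleph_0=\lambda$, contradicting $\kappa\ne\lambda$. Therefore at least one of $\kappa,\lambda$ is uncountable, which combined with asymmetry is precisely the definition of strong asymmetry.

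I do not expect any genuine obstacle here: the entire content of the lemma is carried by the remark preceding it, and the only point that deserves explicit care is verifying that nonprincipality excludes the value $1$ for both $\cf(D)$ and $\ci(E)$, so that ``countable'' can only mean ``$\aleph_0$''. Everything else is a one-line pigeonhole observation on the two remaining countable options.
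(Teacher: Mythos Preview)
Your argument is correct and is exactly the paper's own reasoning: the sentence immediately preceding the lemma (``the only countable cardinality that can appear as coinitiality or cofinality is $\aleph_0$'') is the whole proof, and you have unpacked it faithfully. There is nothing to add or change.
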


We consider the ultrametric balls $B_u(d,e)$ for all $d\in D,\,e\in E$.
Any two of them have nonempty intersection since this intersection will
contain both a final segment of $D$ and an initial segment of $E$. Since
two ultrametric balls with nonempty intersection are already comparable
by inclusion, it follows that these balls form a nonempty chain.
Now there are two cases:
\sn
I) the chain contains a smallest ball,
\n
II) the chain does not contain a smallest ball.
\sn
First, we discuss cuts of type I). We choose $d_0\in D,\,e_0\in E$ such
that $B_u(d_0,e_0)$ is the smallest ball. The shifted cut
\[
C-d_0\>:=\>(\{d-d_0\mid d\in D\}\,,\,\{e-d_0\mid e\in E\})
\]
has the same cofinality as $C$. Moreover,
\[
B_u(d_0,e_0)-d_0\>:=\> \{b-d_0\mid b\in B_u(d_0,e_0)\}\>=\>
B_u(0,e_0-d_0)
\]
remains the smallest ball in the new situation. Therefore, we can assume
that $d_0=0$. Set $\gamma:=ve_0$ and $I:= [0,e_0]$. Then $vh\geq \gamma$
for all $h\in I$, that is, $h\in {\cal O}_\gamma$. The images $D'$ of
$D\cap I$ and $E'$ of $E\cap I$ in ${\cal C}_\gamma ={\cal O}_\gamma/
{\cal M}_\gamma$ are convex and satisfy $D'\leq E'$. If there were
$d'\in D'\cap E'$, then it would be the image of elements $d\in D\cap
I$, and $e\in E\cap I$, with $\gamma< v(e-d)$, and $B_u(d,e)$ would be a
ball properly contained in $B_u(0,e_0)$, contrary to our minimality
assumption. Hence, $D'<E'$. If there were an element strictly between
$D'$ and $E'$, then it would be the image of an element $h-d_0$ with $h$
strictly between $D$ and $E$, which is impossible. So we see that
$(D',E')$ defines a cut $C'$ in ${\cal C}_\gamma\,$, with $D'$ a
final segment of the left cut set and $E'$ an initial segment of the
right cut set.

Since ${\cal C}_\gamma$ is archimedean ordered, the cofinality of $C'$
can only be $(1,1)$, $(1,\aleph_0)$, $(\aleph_0,1)$, or $(\aleph_0,
\aleph_0)$. Lifting cofinal sequences in $D'$ back into $D$, we see that
if the cofinality of $D'$ is $\aleph_0\,$, then so is the cofinality of
$D$. Similarly, if the coinitiality of $E'$ is $\aleph_0\,$, then so is
the coinitiality of $E$. However, if $D'$ contains a last element $a'$,
and if $a\in D\cap I$ is such that $a$ has image $a'$ in
${\cal C}_\gamma\,$, then the set of all elements in $G$ that are sent
to $a'$ is exactly the coset $a+ {\cal M}_\gamma$. This set has empty
intersection with $E$ since $a'\notin E'$. This together with $a'$
being the last element of $D'$ shows that $a+ {\cal M}_\gamma$ is a
final segment of $D$ and therefore, the cofinality of $D$ is equal to
that of ${\cal M}_\gamma$. Similarly, if $E'$ has a first element $b'$
coming from an element $b\in E\cap I$, then $b+{\cal M}_\gamma$ is
an initial segment of $E$ and therefore, the coinitiality of $E$ is
equal to that of ${\cal M}_\gamma$, which in turn is equal to the
cofinality of ${\cal M}_\gamma$. If $\lambda$ denotes this cofinality, we
see that the cofinality of $C$ is
\sn
a) \ $(\lambda,\lambda)$ if $C'$ has cofinality $(1,1)$,\n
b) \ $(\lambda,\aleph_0)$ or $(\aleph_0,\lambda)$ if $C'$ has cofinality
$(1,\aleph_0)$ or $(\aleph_0,1)$, and \n
c) \ $(\aleph_0, \aleph_0)$ if $C'$ has cofinality $(\aleph_0,\aleph_0)$.

\pars
Cofinality $(1,1)$ can only appear for $C'$ if ${\cal C}_\gamma$ is
isomorphic to $\Z$, and then every cut in ${\cal C}_\gamma$
has this cofinality. In this case, $C$ is principal (and thus ``out of
scope'' in our present discussion) if and only if $\lambda=1$, which
means that ${\cal M}_\gamma=\{0\}$ and thus, $\gamma$ is the maximal
element of $vG$ and ${\cal O}_\gamma\isom {\cal C}_\gamma\isom\Z$,
showing that $G$ is discretely ordered.

Cofinality $(\aleph_0,\aleph_0)$ can only appear (and will appear) for
$C'$ if ${\cal C}_\gamma$ has nonprincipal cuts. If $(D'_1,E'_1)$ is a
cut in ${\cal C}_\gamma$ with cofinality $(\aleph_0,\aleph_0)$, then we
set
\begin{eqnarray*}
D_1 & := & \{d\in G\mid d\leq d_1\mbox{ for some }d_1\in{\cal O}_\gamma
\mbox{ with }d_1+{\cal M}_\gamma\in D'_1\}\>,\\
E_1 & := & \{e\in G\mid e\geq e_1\mbox{ for some }e_1\in{\cal O}_\gamma
\mbox{ with }e_1+{\cal M}_\gamma\in E'_1\}\>.
\end{eqnarray*}
This defines a cut in $G$ with cofinality $(\aleph_0,\aleph_0)$.

We conclude that the only choice for the components ${\cal C}_\gamma$
that prevents cofinality $(\lambda,\lambda)$, $\lambda\geq\aleph_0\,$,
for $C$ is: ${\cal C}_\gamma\isom \R$ for all $\gamma\in vG$, or ${\cal
C}_\gamma \isom \Z$ if $\gamma$ is the smallest element of $vG$ and
${\cal C}_\gamma\isom\R$ otherwise.

If this condition is satisfied, then all nonprincipal cuts of type I
have cofinalities $(\lambda,\aleph_0)$ or $(\aleph_0, \lambda)$. Hence all
of them are asymmetric if and only if for all $\gamma\in vG$ not the
last element of $vG$, the cofinality of ${\cal M}_\gamma$ is
uncountable. By part 3) of Lemma~\ref{gco} this happens if and only if
the cut
\[
\gamma^+\>:=\>(\{\delta\in vG\mid\delta\leq\gamma\}\,,\,
\{\delta\in vG \mid\delta>\gamma\})
\]
in $vG$ has cofinality $(1,\lambda)$ with $\lambda$ uncountable.
%
%

Note that every cut in $vG$ of cofinality $(1,\lambda)$ is of the form
$\gamma^+$, in which case the upper cut set will be the value set
$v{\cal M}_\gamma$ of ${\cal M}_\gamma$. Then the cut

\[
(\{d\in G\mid d< {\cal M}_\gamma\},
\{e\in G\mid e\geq c \mbox{ for some }c\in {\cal M}_\gamma\})
\]
in $G$ will have cofinality $(\aleph_0,\lambda')$ with $\lambda'=
\aleph_0$ if $\lambda=1$ and $\lambda'=\lambda$ otherwise. So for every
nonprincipal cut of type I to be asymmetric it is also necessary that
for every cut of cofinality $(1,\lambda)$ in $vG$, $\lambda$ is
uncountable.

\pars
We summarize our discussion so far:

\begin{lemma}                               \label{typeI}
Take any ordered abelian group $G$. Then every nonprincipal cut of type
I is (strongly) asymmetric if and only if the following conditions are
satisfied:
\sn
a) \ ${\cal C}_\gamma\isom\R$ for all $\gamma\in vG$, or
${\cal C}_\gamma \isom \Z$ if $\gamma$ is the largest element of $vG$
and ${\cal C}_\gamma\isom\R$ otherwise.
\sn
b) \ for every cut in $vG$ of cofinality $(1,\lambda)$, $\lambda$ is
uncountable.
\end{lemma}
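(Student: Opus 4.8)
The plan is to read the lemma off the cofinality dictionary already established for type~I cuts in the discussion preceding the statement. After the normalizing shift we may assume the smallest ball is $B_u(0,e_0)$, put $\gamma:=ve_0$, and pass to the induced cut $C'=(D',E')$ in the archimedean component ${\cal C}_\gamma$. Since ${\cal C}_\gamma$ embeds into $\R$, the cofinality of $C'$ is one of $(1,1)$, $(1,\aleph_0)$, $(\aleph_0,1)$, $(\aleph_0,\aleph_0)$, and with $\lambda:=\cf({\cal M}_\gamma)$ the cases (a)/(b)/(c) of the running analysis give that $C$ has cofinality $(\lambda,\lambda)$, then $(\lambda,\aleph_0)$ or $(\aleph_0,\lambda)$, respectively $(\aleph_0,\aleph_0)$. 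Here I would use that ${\cal M}_\gamma$ is a subgroup, so $\ci({\cal M}_\gamma)=\cf({\cal M}_\gamma)=\lambda$, which is exactly what makes the $(1,1)$ case come out genuinely symmetric. Thus the only symmetric outcomes for a nonprincipal $C$ are $(\lambda,\lambda)$ (from $C'$ of type $(1,1)$) and $(\aleph_0,\aleph_0)$ (from $C'$ of type $(\aleph_0,\aleph_0)$), and the whole lemma reduces to forbidding precisely these two.

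For condition~a) I would show that forbidding symmetric outcomes forces a constraint on each ${\cal C}_\gamma$. The value $(\aleph_0,\aleph_0)$ occurs for some $C'$ exactly when ${\cal C}_\gamma$ admits a nonprincipal cut (in an archimedean group such a cut automatically has cofinality $(\aleph_0,\aleph_0)$), while $(1,1)$ occurs exactly when ${\cal C}_\gamma$ is discretely ordered, i.e.\ ${\cal C}_\gamma\isom\Z$. The key sublemma is the classification that an archimedean ordered abelian group has no nonprincipal cut if and only if it is isomorphic to $\Z$ or to $\R$ (the trivial group aside): a dense proper subgroup of $\R$ always produces a nonprincipal cut, whereas a discrete one is cyclic. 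Combining, ${\cal C}_\gamma$ must be $\R$, except that ${\cal C}_\gamma\isom\Z$ stays permissible when the resulting $(1,1)$-cut of $C'$ cannot yield a nonprincipal $C$; this happens exactly when $\lambda=1$, i.e.\ ${\cal M}_\gamma=\{0\}$, i.e.\ $\gamma$ is the largest element of $vG$. This is precisely the dichotomy in a).

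For condition~b) I would work under a), where every surviving nonprincipal type~I cut has cofinality $(\lambda,\aleph_0)$ or $(\aleph_0,\lambda)$ with $\lambda=\cf({\cal M}_\gamma)$, hence is asymmetric exactly when $\lambda$ is uncountable. Part~3) of Lemma~\ref{gco} rewrites $\lambda=\max\{\aleph_0,\ci(v{\cal M}_\gamma)\}$, so $\lambda$ is uncountable iff the coinitiality of $\{\delta\in vG\mid\delta>\gamma\}$ is, i.e.\ iff the cut $\gamma^+$ in $vG$ has cofinality $(1,\mu)$ with $\mu$ uncountable; since every $(1,\mu)$-cut in $vG$ is of this form $\gamma^+$, this is exactly b). For necessity I would exhibit the explicit violating cut: if some $\gamma^+$ has cofinality $(1,\aleph_0)$, then the cut $(\{d\mid d<{\cal M}_\gamma\},\{e\mid e\geq c\text{ for some }c\in{\cal M}_\gamma\})$ in $G$ has cofinality $(\aleph_0,\aleph_0)$ and is symmetric. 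The parenthetical ``(strongly)'' is automatic, since by Lemma~\ref{asa} a nonprincipal asymmetric cut is already strongly asymmetric. I expect the main obstacle to be the classification sublemma in a) together with lining up the boundary case $\lambda=1$ (where $C$ is principal, hence out of scope) correctly with ``$\gamma$ the largest element of $vG$''; the rest is bookkeeping with the dictionary.
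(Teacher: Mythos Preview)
Your proposal is correct and follows essentially the same route as the paper: the lemma is stated as a summary of the preceding discussion, and you have accurately reconstructed that discussion---the cofinality dictionary via the induced cut $C'$ in ${\cal C}_\gamma$, the elimination of the $(\lambda,\lambda)$ and $(\aleph_0,\aleph_0)$ outcomes via the structure of ${\cal C}_\gamma$, the translation of the remaining condition on $\cf({\cal M}_\gamma)$ into condition~b) via part~3) of Lemma~\ref{gco}, and the explicit witnessing cut for necessity. The classification sublemma you single out (an archimedean ordered abelian group has no nonprincipal cut iff it is $\Z$ or $\R$) is exactly what the paper uses implicitly when it writes ``the only choice for the components ${\cal C}_\gamma$ that prevents\ldots''.
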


\parm
Now we discuss nonprincipal cuts $C$ of type II). We assume in addition
that the ordered abelian group $G$ is spherically complete w.r.t.\ its
natural valuation $v$. Then there is some $g\in G$ such that
\[
g\in \bigcap_{d\in D,\,e\in E}^{} B_u(d,e)\;.
\]
Replacing the cut $C$ by the shifted cut $C-g$ as we have done before
already, we can assume that $g=0$. Since $C$ is nonprincipal by
asssumption, there must be $d_0\in D,\,e_0\in E$ such that $d_0\leq
0\leq e_0$ does not hold, and we have two cases:
\sn
A) \ $e_0<0\,$,
\n
B) \ $0<d_0$.
\sn
Again, we set $I:=[d_0,e_0]$. We set $\tilde{D}=\{vd\mid d\in D\cap I\}
\subseteq vG$ and $\tilde{E}=\{ve\mid e\in E\cap I\}\subseteq vG$.
\pars
Let us first discuss case A). We claim that $\tilde{D}<\tilde{E}$. We
observe that ``$\leq$'' holds since $d<e<0$ for $d\in D\cap I$ and
$e\in E\cap I$. Suppose that $\tilde{D}\cap \tilde{E}\ne\emptyset$, that
is, $vd=ve$ for some $d\in D\cap I$ and $e\in E\cap I$. Then $v(e-d)\geq
vd$ by the ultrametric triangle law, and since there is no smallest ball
by assumption, we can even choose $d,e$ such that $v(e-d)>vd$. But then,
$0$ would not lie in $B_u(d,e)$, a contradiction. We have proved
our claim. Now if there were an element $\alpha$ stricly between the two
sets, then there were some $a\in I$ with $va=\alpha$ and $a>0$.
This would yield that $d<a<e$ for all $d\in D\cap I$ and $e\in
E\cap I$ and thus, $D<a<E$, a contradiction.
%

We conclude that $(\tilde{D},\tilde{E})$ defines a cut $\tilde{C}$ in
$vG$, with $\tilde{D}$ a final segment of the left cut set, and
$\tilde{E}$ an initial segment of the right cut set. Denote by
$(\tilde{\kappa},\tilde{\lambda})$ its cofinality. We have that
$vd<ve$ and consequently $vd=v(e-d)$ for all $d\in D\cap I$
and $e\in E\cap I$. Since by assumption there is no smallest ball, there
is no largest value $v(e-d)$. This shows that $\tilde{D}$ has no largest
element and therefore, $\tilde{\kappa}$ is infinite. Lifting cofinal
sequences in $\tilde{D}$ to coinitial sequences in $D$, we see that
$\kappa=\tilde{\kappa}$. By the same argument, if $\tilde{\lambda}$ is
infinite, then $\lambda=\tilde{\lambda}$. If on the other hand
$\tilde{\lambda}=1$, then we take $\gamma\in G$ to be the smallest
element of $\tilde{E}$. The preimage of $\gamma$ under the valuation is
${\cal O}_\gamma\setminus {\cal M}_\gamma$, and this set is coinitial
in $E$. The cofinality of ${\cal O}_\gamma\setminus {\cal M}_\gamma$ is
equal to the cofinality of ${\cal O}_\gamma$, which in turn is equal to
the cofinality $\aleph_0$ of the archimedean ordered group
${\cal C}_\gamma$. Hence in this case, $\lambda=\aleph_0\,$.

From this discussion it follows that $C$ is asymmetric if and only if
$\tilde{C}$ is strongly asymmetric.

\pars
Now we consider case B). Since $0<d<e$ for $d\in D\cap I$ and $e\in E
\cap I$, we now obtain that $\tilde{E} \leq \tilde{D}$. It is proven as
in case A) that $(\tilde{E},\tilde{D})$ defines a cut $\tilde{C}$ in
$vG$, and that $\tilde{E}$ has no largest element. In this case the
argument is the same as before, but with $\tilde{D}$ and $\tilde{E}$
interchanged, and the conclusion is the same as in case A). We note that
in both cases, the cofinality of the left cut set of $C$ must be
infinite.

\pars
If we have a cut $\tilde{C}=(\tilde{D},\tilde{E})$ in $vG$ of cofinality
$(\tilde{\kappa},\tilde{\lambda})$ with $\tilde{\kappa}$ infinite, then
we can associate to it a nonprincipal cut of type II as follows. We set
$D=\{d\in G\mid d<0\mbox{ and } vd\in \tilde{D}\}$ and $E=\{e\in G\mid
e>0\mbox{ or } ve\in \tilde{E}\}$. This is a cut in $G$, and it is of
type II since for all $d\in D$ and $e\in E$, $e<0$, we have that $vd<
ve$ and hence $v(e-d)=vd\in \tilde{D}$, which has no largest element.
Then $C$ induces the cut $\tilde{C}$ in the way described under case A).
Thus for every cut of type II to be asymmetric, it is necessary that
every cut in $vG$ of cofinality $(\tilde{\kappa},\tilde{\lambda})$ with
$\tilde{\kappa}$ infinite is strongly asymmetric.

We summarize:
\begin{lemma}                               \label{typeII}
Take any ordered abelian group $G$ which is spherically complete w.r.t.\
its natural valuation $v$. Then every nonprincipal cut of type II is
asymmetric (and hence strongly asymmetric) if and only if every cut in
$vG$ of cofinality $(\kappa,\lambda)$ with $\kappa$ infinite is
strongly asymmetric.
\end{lemma}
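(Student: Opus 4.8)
The plan is to read off both implications directly from the analysis of type~II cuts carried out immediately above, which already establishes the precise dictionary between a nonprincipal type~II cut $C$ in $G$ and a cut $\tilde C$ in $vG$. The two facts I will use are: (i) to every such $C$ one associates a cut $\tilde C=(\tilde D,\tilde E)$ in $vG$ whose lower cut set has no largest element, so that its left cofinality $\tilde\kappa$ is infinite; and (ii) with this association one has $C$ asymmetric if and only if $\tilde C$ is strongly asymmetric. Conversely, every cut $\tilde C$ in $vG$ with $\tilde\kappa$ infinite arises this way. Granting these, the equivalence in the lemma is almost formal.

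For sufficiency, I would take an arbitrary nonprincipal cut $C$ of type~II. Invoking spherical completeness w.r.t.\ $v$, I shift $C$ so that $0\in\bigcap_{d\in D,\,e\in E}B_u(d,e)$ and separate into case A) $e_0<0$ and case B) $0<d_0$ as above; each yields a cut $\tilde C$ in $vG$ with $\tilde\kappa$ infinite. The hypothesis says every such cut of $vG$ is strongly asymmetric, so by (ii) the cut $C$ is asymmetric, and Lemma~\ref{asa} then upgrades this to strongly asymmetric.

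For necessity I would run the dictionary backwards: given a cut $\tilde C=(\tilde D,\tilde E)$ in $vG$ of cofinality $(\tilde\kappa,\tilde\lambda)$ with $\tilde\kappa$ infinite, I form $D=\{d\in G\mid d<0\text{ and }vd\in\tilde D\}$ and $E=\{e\in G\mid e>0\text{ or }ve\in\tilde E\}$, check that the resulting $C$ is indeed nonprincipal of type~II (no smallest ball, since the values $v(e-d)=vd$ range over $\tilde D$, which has no largest element), and note that $C$ induces $\tilde C$ as in case A). The hypothesis that every type~II cut is asymmetric, combined with (ii), forces $\tilde C$ to be strongly asymmetric.

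The main (and really the only) point needing care is the verification of (ii), which silently uses the elementary observation that two distinct infinite regular cardinals cannot both be countable. When $\tilde\lambda$ is infinite one has $(\kappa,\lambda)=(\tilde\kappa,\tilde\lambda)$, so $\kappa\ne\lambda$ already forces the larger cardinal to be uncountable; when $\tilde\lambda=1$ one has $\lambda=\aleph_0$ and $\kappa=\tilde\kappa$, so $\kappa\ne\lambda$ is equivalent to $\tilde\kappa$ being uncountable. In both subcases ``$\kappa\ne\lambda$'' matches exactly ``$\tilde\kappa\ne\tilde\lambda$ with one of them uncountable'', which is strong asymmetry of $\tilde C$. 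Confirming that these subcases align, and that the backward construction genuinely produces a type~II (not type~I) cut, is the whole of the remaining work.
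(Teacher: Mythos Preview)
Your proposal is correct and follows essentially the same approach as the paper: the lemma is a summary of the preceding type~II analysis, and you invoke exactly the same dictionary (points (i) and (ii)) and the same backward construction of a type~II cut from a cut in $vG$ with infinite left cofinality. Your explicit unpacking of (ii) into the two subcases $\tilde\lambda$ infinite versus $\tilde\lambda=1$ is slightly more detailed than the paper's one-line assertion, but the argument is the same.
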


%
%
\section{Proof of the main theorems}        \label{sectprf}
\noindent
{\bf Proof of Theorem~\ref{MTag}}:
\n
Take any ordered abelian group $G$. Assume first that it is
symmetrically complete. Then by Proposition~\ref{scoscu}, $G$ is
spherically complete w.r.t.\ its natural valuation. By Lemma~\ref{prin},
$G$ is densely ordered. Thus it cannot have an archimedean component
${\cal C}_\gamma\isom \Z$ with $\gamma$ the largest element of $vG$
because otherwise, it would have a convex subgroup isomorphic to $\Z$
and would then be discretely ordered. Hence by Lemma~\ref{typeI}, every
archimedean component of $G$ is isomorphic to $\R$ and for every cut in
$vG$ of cofinality $(1,\kappa)$, $\kappa$ is uncountable. Finally by
Lemma~\ref{typeII}, every cut in $vG$ of cofinality $(\lambda,\kappa)$
with $\lambda$ infinite is strongly asymmetric. Altogether, every cut in
$vG$ is strongly asymmetric. This proves that $vG$ is strongly
symmetrically complete.

If $G$ is spherically complete w.r.t.\ its natural valuation, every
archimedean component of $G$ is isomorphic to $\R$ and $vG$ is strongly
symmetrically complete, then in particular, $G$ is densely ordered, and
it follows from Lemmas~\ref{prin}, \ref{typeI} and~\ref{typeII} that $G$
is symmetrically complete.

From Lemma~\ref{asa} we see that for a symmetrically complete $G$ to be
strongly symmetrically complete it suffices that every principal cut is
strongly asymmetric, which by Lemma~\ref{prin} holds if and only if in
addition to the other conditions, the cofinality of $vG$ is uncountable.

%
A strongly symmetrically complete $G$ is extremely symmetrically
complete if and only if in addition, its cofinality (which is equal to
its coinitiality) is uncountable. By part 1) of Lemma~\ref{gco}, this
holds if and only if the coinitiality of $vG$ is uncountable. Hence by
what we have just proved before, a symmetrically complete $G$ is
extremely symmetrically complete if and only if in addition, $vG$ is
extremely symmetrically complete.
\qed

\bn
{\bf Proof of Theorem~\ref{MTf}}:
\n
Considering the additive ordered abelian group of the ordered field $K$,
the first assertion of Theorem~\ref{MTf} follows readily from that of
Theorem~\ref{MTag} if one takes into account that through
multiplication, all archimedean components are isomorphic
to the ordered additive group of the residue field.

Similarly, the equivalence of b) and c) follows from the third case of
Theorem~\ref{MTag}. Since $vK$ is an ordered abelian group, its
cofinality is equal to its coinitiality, so the condition that it is
strongly symmetrically complete with uncountable cofinality already
implies that it is extremely symmetrically complete. Hence, by the
second case of Theorem~\ref{MTag}, a) is equivalent with c).   \qed

\bn
{\bf Proof of Corollary~\ref{HP,PSF}}:
\n
The assertion for ordered abelian groups follows from the facts that
have been mentioned before. For ordered fields, it remains to show that
a power series field with residue field $\R$ and divisible value group
is real closed. Since every power series field is henselian under its
canonical valuation, this follows from \cite[Theorem (8.6)]{[P]}. \qed

\bn
{\bf Proof of Theorem~\ref{MTagd}}:
\n
The equivalence of a) and b) follows directly from
Proposition~\ref{scobcas}. It remains to prove the
equivalence of a) and c).

If $G$ is discretely ordered, then $vG$ must have a largest element
$vg$ (where $g$ can be chosen to be the smallest positive element of
$G$) with archimedean component ${\cal O}_{vg}\isom {\cal C}_{vg}
\isom\Z$. We identify the convex subgroup ${\cal O}_{vg}$ with $\Z$. We
will now prove that $G$ is symmetrically d-complete if and only if
$G/\Z$ is strongly symmetrically complete.

Take any cut $(D,E)$ in $G$. Since the canonical epimorphism
$G\rightarrow G/\Z$ preserves $\leq$, the image $(\ovl{D},\ovl{E})$ of
$(D,E)$ in $G/\Z$ is a quasicut. If $\ovl{D}$ and $\ovl{E}$ have a
common element $\ovl{d}$, then there is $d\in D$ and $z\in\Z$ such that
$d+z\in E$. In this case, the cofinality of $(D,E)$ is $(1,1)$. Now
suppose that $\ovl{D}$ and $\ovl{E}$ have no common element. Then for
all $d\in D$ and $e\in E$, we have that $d+\Z=\{d+z\mid d\in D,\, z\in
\Z\}\subset D$ and $e+\Z\subset E$. Hence if $D'\subset D$ is a set of
representatives for $\ovl{D}$ and $E'\subset E$ is a set of
representatives for $\ovl{E}$, then $D=D'+\Z=\{d+z\mid d\in D',\,
z\in\Z\}$ and $E=E'+\Z$. This yields that
\begin{equation}                            \label{relcof}
\left\{\begin{array}{rclrcl}
\cf (D) &=&\max\{\cf (D'),\aleph_0\}&=&\max\{\cf (\ovl{D}),\aleph_0\}\>,\\
\ci (E) &=&\max\{\ci (E'),\aleph_0\}&=&\max\{\ci (\ovl{E}),\aleph_0\}\>.
\end{array}\right.
\end{equation}

Suppose that $G/\Z$ is strongly symmetrically complete and that $(D,E)$
is a cut in $G$ of cofinality $\ne (1,1)$. Then by what we have just
shown, $(\cf(D),\ci(E))=(\max\{\cf (\ovl{D}),\aleph_0\},\max\{\ci
(\ovl{E}),\aleph_0\})$. By our assumption on $G/\Z$, $(\ovl{D},\ovl{E})$
is strongly asymmetric, which yields that $\cf(D)$ and
$\ci(E)$ are not equal and at least one of them is uncountable.

is equal to $\aleph_0$ and the other is
uncountable, or that
In each case, $(D,E)$ is strongly
asymmetric. This proves that $G$ is symmetrically d-complete.

\pars
For the converse, assume that $G$ is symmetrically d-complete and that
$(\ovl{D},\ovl{E})$ is any cut in $G/\Z$. Then we pick a set $D'\subset
G$ of representatives for $\ovl{D}$ and a set $E'\subset E$ of
representatives for $\ovl{E}$. With $D=D'+\Z$ and $E=E'+\Z$ we obtain a
nonprincipal cut $(D,E)$ in $G$ with image $(\ovl{D},\ovl{E})$ in
$G/\Z$. As before, (\ref{relcof}) holds. By our assumption on $G$, the
cut $(D,E)$ is strongly asymmetric. This implies that at least one of
$\cf(\ovl{D})$ and $\ci(\ovl{E})$ is uncountable and that if both are,
then they are not equal. This shows that $(\ovl{D},\ovl{E})$ is strongly
asymmetric, which proves that $G/\Z$ is strongly symmetrically complete.

\pars
The last equivalence in the theorem is seen as follows. If $G$ is
extremely symmetrically d-complete, then it cannot be isomorphic to $\Z$
and hence, $G/\Z$ is nontrivial. But then, the cofinality of $G$ is
equal to that of $G/\Z$.                                    \qed

%
%
\section{Construction of symmetrically complete linearly ordered
sets}                                       \label{sectcon}
In this section, we will use ``$+$'' in a different way than before. If
$I$ and $J$ are ordered sets, then $I+J$ denotes composition of $I$
and $J$, that is, the disjoint union $I\mathbin{\dot{\cup}} J$ with the
extension of the orderings of $I$ and $J$ given by $i<j$ for all $i \in
I$, $j\in J$.

For any linearly ordered set $I=(I,<)$, we denote by $I^c$ its
completion. Note that $\Coin(I^c)=\Coin(I)$ and $\Cofin(I^c)=\Cofin(I)$.
Further, we denote by $I^*$ the set $I$ endowed with the inverted
ordering $<^*$, where $i<^* j\Leftrightarrow j<i$. If $I'$ is another
ordered set, then $I+I'$ is the {\emph sum} in the sense of order
theory, that is, the orderings of $I$ and $I'$ are extended to $I\cup
I'$ in the unique way such that $i<i'$ for all $i\in I$ and $i'\in I'$.

\pars
We choose some ordered set $I$ (where $I=\emptyset$ is allowed) and
infinite regular cardinals $\mu$ and $\kappa_\nu\,,\, \lambda_\nu$ for
all $\nu<\mu$. We define
\[
I_0\>:=\>\lambda_0^*+I^c+\kappa_0 \quad\mbox{and}\quad
I_\nu:=\lambda_\nu^*+\kappa_\nu\;\mbox{for $0<\nu<\mu$.}
\]
Note that all $I_\nu\,$, $\nu<\mu$, are cut complete. Note further that
if $C$ is a cut in $I_0$ with cofinality $(\kappa,\lambda)$, then
$\kappa\in\Cofin (I)\cup \Reg_{<\kappa_0}$ and $\lambda
\in \Coin (I)\cup \Reg_{<\lambda_0}$.

We define $J$ to be the lexicographic product over the $I_\nu$ with
index set $\mu$; that is, $J$ is the set of all sequences
$(\alpha_\nu)_{\nu<\mu}$ with $\alpha_\nu\in I_\nu$ for all $\nu<\mu$,
endowed with the following ordering: if $(\alpha_\nu)_{\nu<\mu}$ and
$(\beta_\nu)_{\nu<\mu}$ are two different sequences, then there is a
smallest $\nu_0<\mu$ such that $\alpha_{\nu_0} \ne \beta_{\nu_0}$ and we
set $(\alpha_\nu)_{\nu<\mu}< (\beta_\nu)_{\nu<\mu}$ if $\alpha_{\nu_0}<
\beta_{\nu_0}\,$.

\begin{theorem}
The cofinalities of the cuts of $J$ are:
\sn
$(1,\mu)\,,\,(\mu,1)$,
\n
$(\kappa_1,\lambda)\,,\,(\kappa,\lambda_1)$ for $\lambda\in \Coin
(I)\cup\Reg_{<\lambda_0}\,,\,\kappa\in\Cofin (I)\cup \Reg_{<\kappa_0}$,
\n
$(\kappa_{\nu+1},\lambda)\,,\,(\kappa,\lambda_{\nu+1})$ for $0<\nu<\mu$
and $\kappa< \kappa_\nu$, $\lambda<\lambda_\nu$ regular cardinals,
\n
$(\kappa_\nu,\lambda_\nu)$ for $\nu<\mu$ a successor ordinal, and
\n
$(\kappa_\nu,\mu')\,,\,(\mu',\lambda_\nu)$ for $\nu<\mu$ a limit ordinal
and $\mu'<\mu$ its cofinality.
\sn
Further, the cofinality of $J$ is $\kappa_0$ and its coinitiality is
$\lambda_0\,$.
\end{theorem}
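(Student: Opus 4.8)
The plan is to analyze cuts in the lexicographic product $J$ by classifying them according to where the two cut sets first diverge. Any cut $C=(D,E)$ in $J$ is determined by a sequence of choices along the index set $\mu$: reading the coordinates $\nu<\mu$ in order, either the cut is ``decided'' at some coordinate $\nu_0$ (meaning the $\nu_0$-th coordinates of elements approaching the cut from below and above eventually stabilize on a cut within the cut-complete factor $I_{\nu_0}$), or the cut is never decided at any single coordinate, in which case it is governed by the index set $\mu$ itself. First I would make this dichotomy precise: given $C$, let $\nu_0$ be the least index such that the projections of $D$ and $E$ to the first $\nu_0$ coordinates still agree but their $\nu_0$-th coordinates separate; the key auxiliary fact is that each $I_\nu$ is cut complete (noted just before the statement), so a cut inside a factor is always principal there, forced to have cofinality $(1,\cdot)$ or $(\cdot,1)$ coming from the endpoints $\kappa_\nu$ and $\lambda_\nu^*$.

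The main computation is a case analysis organized by the structure of the ``prefix'' on which $D$ and $E$ agree. I would carry it out as follows. First, the two cuts $(1,\mu)$ and $(\mu,1)$ arise from the extreme cuts determined by the index set: approaching the top/bottom of $J$ requires adjusting coordinates at cofinally/coinitially many indices $\nu<\mu$, producing cofinality $\mu$ on one side and a principal $1$ on the other. Second, when the cut is decided at coordinate $\nu_0$, the cofinality of $D$ (resp.\ coinitiality of $E$) splits into two contributions: the contribution from within the factor $I_{\nu_0}$, and the contribution from the ``tail'' coordinates $\nu>\nu_0$ which must be pushed to their extremes. Here I would use the endpoint structure: the top element of $I_{\nu_0}=\lambda_{\nu_0}^*+\kappa_{\nu_0}$ (for $\nu_0>0$) is approached with cofinality $\kappa_{\nu_0}$ and its bottom with coinitiality $\lambda_{\nu_0}$, while the interior cuts of $I_0=\lambda_0^*+I^c+\kappa_0$ contribute the cofinalities/coinitialities recorded in $\Cofin(I)\cup\Reg_{<\kappa_0}$ and $\Coin(I)\cup\Reg_{<\lambda_0}$. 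Combining a factor-cut's cofinality with the cofinality $\mu'$ needed to drive the tail coordinates yields exactly the listed lists, where the successor/limit distinction for $\nu$ determines whether the tail contributes $\kappa_{\nu+1},\lambda_{\nu+1}$ (successor) or the cofinality $\mu'$ of a limit ordinal.

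The bookkeeping that requires care is the interaction between the two sources of cofinality when the cut is decided at $\nu_0$: one must argue that the cofinality of $D$ is the cofinality of the lexicographically determined approach, and that this equals either the in-factor cofinality or the tail cofinality depending on whether the decided coordinate value is an endpoint or an interior point of $I_{\nu_0}$. Concretely, if the $\nu_0$-th coordinate of the limiting elements of $D$ is not the top of $I_{\nu_0}$, then $D$ is cofinal via a single coordinate adjustment giving the in-factor cofinality; if it is the top, the approach must climb through later coordinates, giving the tail cofinality $\kappa_{\nu_0+1}$ or a limit cofinality $\mu'$. I expect the hardest step to be verifying that these two mechanisms never mix to produce unlisted cofinality pairs — that is, showing the cofinality of each side is always exactly one of the two contributions and not their maximum or some other combination. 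This is where the cut completeness of each $I_\nu$ is essential, since it guarantees that the in-factor part of any cut is principal and hence contributes a clean $1$, preventing spurious pairs. Finally, the last sentence, that $\cf(J)=\kappa_0$ and $\ci(J)=\lambda_0$, follows immediately: the top of $J$ is reached by fixing all coordinates at their maxima and approaching the top of the leading factor $I_0$, whose cofinality is $\kappa_0$ by construction, and symmetrically for the coinitiality.
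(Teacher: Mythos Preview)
Your overall strategy---locate the least level $\nu_0$ at which the projections of $D$ and $E$ separate, then exploit cut-completeness of the factors---is the paper's approach. But your case analysis misidentifies two of the listed families and repeatedly appeals to nonexistent endpoints of the factors.

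The pairs $(1,\mu)$ and $(\mu,1)$ are the cofinalities of \emph{every} principal cut in $J$, not of ``extreme cuts'' near the top or bottom of $J$. If $D$ has largest element $a=(\alpha_\nu)_{\nu<\mu}$, a coinitial sequence in $E$ is obtained by bumping a single coordinate $\alpha_\rho$ upward for each $\rho<\mu$; regularity of $\mu$ gives $\ci(E)=\mu$. You yourself correctly compute $\cf(J)=\kappa_0$ in your final paragraph via the leading factor $I_0$, contradicting your earlier claim that approaching the top of $J$ yields $\mu$.

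More seriously, the factors $I_\nu=\lambda_\nu^*+\kappa_\nu$ have neither top nor bottom, so phrases like ``if the $\nu_0$-th coordinate is the top of $I_{\nu_0}$'' or ``fixing all coordinates at their maxima'' cannot carry the argument. The correct dichotomy at level $\nu_0$ is whether the induced sets $D_{\nu_0},E_{\nu_0}\subseteq I_{\nu_0}$ are both nonempty. When both are nonempty, completeness forces one to have an endpoint; if $D_{\nu_0}$ has maximum $\tilde\alpha$, the cofinality of $D$ comes from the \emph{next} factor as $\kappa_{\nu_0+1}$ (cofinal elements fix coordinates $\leq\nu_0$ and run through $\kappa_{\nu_0+1}\subset I_{\nu_0+1}$), while if $D_{\nu_0}$ has no maximum then $\cf(D)=\cf(D_{\nu_0})$, a regular cardinal $<\kappa_{\nu_0}$ (or in $\Cofin(I)\cup\Reg_{<\kappa_0}$ when $\nu_0=0$). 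The limit cofinality $\mu'$ arises from a structurally different situation: $\nu_0$ is a limit ordinal and one of $D_{\nu_0},E_{\nu_0}$ is \emph{empty}. Then every element of (say) $E$ already departs from the common prefix at some level $<\nu_0$, and a coinitial family in $E$ is indexed by a cofinal subset of $\nu_0$, giving $\ci(E)=\cf(\nu_0)=\mu'$ paired with $\cf(D)=\cf(I_{\nu_0})=\kappa_{\nu_0}$. Your sketch conflates this empty-projection case with an ``endpoint'' case and treats $\kappa_{\nu_0+1}$ and $\mu'$ as alternative ``tail contributions'', but they come from disjoint configurations, and without separating them you will not recover the list.
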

\begin{proof}
Take any cut $(D,E)$ in $J$. Assume first that $D$ has a maximal element
$(\alpha_\nu)_{\nu<\mu}\,$. By our choice of the linearly ordered sets
$I_\nu$ we can choose, for every $\nu<\mu$, some $\beta_\nu\in I_\nu$
such that $\beta_\nu>\alpha_\nu\,$. For $\rho<\mu$ we define
$\beta^\rho_\rho:=\beta_\rho$ and $\beta^\rho_\nu :=\alpha_\nu$ for
$\nu\ne\rho$. Then the elements $(\beta^\rho_\nu)_{\nu<\mu}$,
$\rho<\mu$, form a strictly decreasing coinitial sequence of elements in
$E$. Since $\mu$ was chosen to be regular, this shows that the
cofinality of $(D,E)$ is $(1,\mu)$. Similarly, it is shown that if $E$
has a minimal element, then the cofinality of $(D,E)$ is $(\mu,1)$.

\pars
Now assume that $(D,E)$ is nonprincipal. Take $S$ to be the set of all
$\nu'<\mu$ for which there exist $a_{\nu'}=(\alpha_{\nu',\nu})_{\nu<\mu}
\in D$ and $b_{\nu'}= (\beta_{\nu',\nu})_{\nu<\mu}\in E$ such that
$\alpha_{\nu',\nu}= \beta_{\nu',\nu}$ for all $\nu\leq\nu'$. Note that
$S$ is a proper initial segment of the set $\mu$. We claim that
$\nu_1<\nu_2\in S$ implies that
\[
\alpha_{\nu_1,\nu}\>=\>\alpha_{\nu_2,\nu}\;\;
\mbox{ for all }\>\nu\leq\nu_1\>,
\]
or in other words, $(\alpha_{\nu_1,\nu})_{\nu\leq\nu_1}$ is a truncation
of $a_{\nu_2}\,$. Indeed, suppose that this were not the case. Then
there would be some $\nu'<\nu_1$ such that
\[
\beta_{\nu_1,\nu'}=\alpha_{\nu_1,\nu'}\ne
\alpha_{\nu_2,\nu'}=\beta_{\nu_2,\nu'}\>.
\]
Suppose that $\nu'$ is minimal with this property and that the left hand
side is smaller. But then, $(\beta_{\nu_1,\nu})_{\nu<\mu}<(\alpha_{\nu_2,
\nu})_{\nu<\mu}\,$, so $b_{\nu_1}\in D$, a contradiction. A similar
contradiction is obtained if the right hand side is smaller.

Now take $\mu_0$ to be the minimum of $\mu\setminus S$; in fact, $S$ is
is equal to the set $\mu_0\,$. We define
\begin{eqnarray*}
D_{\mu_0} & := & \{\alpha\in I_{\mu_0}\mid\exists\,
(\alpha_\nu)_{\nu<\mu}\in D:\, \alpha_{\mu_0}=\alpha\mbox{ and }
\alpha_\nu=\alpha_{\nu,\nu}\mbox{ for }\nu<\mu_0\}\>,\\
E_{\mu_0} & := & \{\beta\in I_{\mu_0}\mid\exists
(\beta_\nu)_{\nu<\mu}\in E:\, \beta_{\mu_0}=\beta\mbox{ and }
\beta_\nu=\alpha_{\nu,\nu}\mbox{ for }\nu<\mu_0\}\>.
\end{eqnarray*}
By our definition of $\mu_0\,$, these two sets are disjoint, and it is
clear that their union is $I_{\mu_0}$ and every element in $D_{\mu_0}$
is smaller than every element in $E_{\mu_0}$. However, one of the sets
may be empty, and we will first consider this case. Suppose that
$E_{\mu_0}=\emptyset$. Then $D_{\mu_0}=I_{\mu_0}$ and since this has no
last element, the cofinality of $D$ is the same as that of
$I_{\mu_0}\,$, which is $\kappa_{\mu_0}$. In order to determine the
coinitiality of $E$, we proceed as in the beginning of this proof.
Observe that since $E_{\mu_0}=\emptyset$, for an element
$(\beta^\rho_\nu)_{\nu<\mu}$ to lie in $E$ it is necessary that
$\beta^\rho_\nu>\alpha_{\nu,\nu}$ for some $\nu<\mu_0\,$.
For all $\nu<\mu_0$, we choose some $\beta_\nu\in I_\nu$ such that
$\beta_\nu> \alpha_{\nu,\nu}\,$; then for all $\rho<\mu_0$ we define
$\beta^\rho_\rho:=\beta_\rho$, $\beta^\rho_\nu :=\alpha_{\nu,\nu}$ for
$\nu<\rho$, and choose $\beta^\rho_\nu$ arbitrarily for
$\rho<\nu<\mu$. Then the elements $(\beta^\rho_\nu)_{\nu<\mu}$,
$\rho<\mu_0\,$, form a strictly decreasing coinitial sequence in
$E$. If $\mu'$ denotes the cofinality of $\mu_0\,$, this shows that the
coinitiality of $E$ is $\mu'$, and the cofinality of $(D,E)$ is
$(\kappa_{\mu_0},\mu')$. Since $\mu$ was chosen to be regular, we have
that $\mu'<\mu$.

Similarly, it is shown that if $D_{\mu_0}=\emptyset$, then the
cofinality of $(D,E)$ is $(\mu',\lambda_{\mu_0})$. Note that $D_{\mu_0}$
or $E_{\mu_0}$ can only be empty if $\mu_0$ is a limit ordinal. Indeed,
if $\mu_0=0$ and $(\alpha_\nu)_{\nu<\mu}\in D$, $(\beta_\nu)_{\nu<\mu}
\in E$, then $\alpha_0\in D_0$ and $\beta_0\in E_0\,$; if $\mu_0=
\mu'+1$, then with $(\alpha_{\mu',\nu})_{\nu<\mu} \in D$ and
$(\beta_{\mu',\nu})_{\nu<\mu}\in E$ chosen as before, it follows that
$\alpha_{\mu',\mu_0}\in D_{\mu_0}$ and $\beta_{\mu',\mu_0}\in
E_{\mu_0}\,$.

\pars
From now on we assume that both $D_{\mu_0}$ and $E_{\mu_0}$ are
nonempty.
Since $I_{\mu_0}$ is complete, $D_{\mu_0}$ has a maximal element
or $E_{\mu_0}$ has a minimal element.



Suppose that $D_{\mu_0}$ has a maximal element $\tilde{\alpha}$. Then
for all $\rho\in\kappa_{\mu_0+1}\subset I_{\mu_0+1}\,$, we define
$\alpha^\rho_\nu= \alpha_{\nu,\nu}$ for $\nu<\mu_0\,$,
$\alpha^\rho_{\mu_0} =\tilde{\alpha}$, $\alpha^\rho_{\mu_0+1}=\rho$, and
choose an arbitrary element of $I_\nu$ for $\alpha^\rho_\nu$ when
$\mu_0+1<\nu <\mu\,$. Then the elements $(\alpha^\rho_\nu)_{\nu<\mu}$,
$\rho\in \kappa_{\mu_0+1}\,$, form a strictly increasing cofinal
sequence in $D$. Since $\kappa_{\mu_0+1}$ was chosen to be a regular
cardinal, this shows that the cofinality of $D$ is $\kappa_{\mu_0+1}$.

Suppose that $E_{\mu_0}$ has a minimal element $\tilde{\beta}$. Then for
every $\sigma\in \lambda_{\mu_0+1}^*\subset I_{\mu_0+1}\,$, we define
$\beta^\sigma_\nu= \alpha_{\nu,\nu}$ for $\nu\leq\mu_0\,$,
$\beta^\sigma_{\mu_0}= \tilde{\beta}$, $\beta^\sigma_{\mu_0+1} =\sigma$,
and choose an arbitrary element of $I_\nu$ for $\beta^\sigma_\nu$ when
$\mu_0+1< \nu<\mu\,$. Then the elements $(\beta^\sigma_\nu)_{\nu<\mu}$,
$\sigma \in\lambda_{\mu_0+1}^*\,$, form a strictly decreasing coinitial
sequence in $E$. Since $\lambda_{\mu_0+1}$ was chosen to be a regular
cardinal, this shows that the coinitiality of $E$ is $\lambda_{\mu_0+1}$.

If $D_{\mu_0}$ has a maximal element and $E_{\mu_0}$ has a minimal
element, then we obtain that the cofinality of $(D,E)$ is
$(\kappa_{\mu_0+1},\lambda_{\mu_0+1})$.

\pars
Now we deal with the case where $D_{\mu_0}$ does not have a maximal
element. Since $I_{\mu_0}$ is complete, $E_{\mu_0}$ must then have a
smallest element, and by what we have already shown, we find that $E$
has coinitiality $\lambda_{\mu_0+1}$. Denote the cofinality of
$D_{\mu_0}$ by $\kappa$. We choose a sequence of elements
$\alpha^\rho_{\mu_0}$, $\rho<\kappa$, cofinal in $D_{\mu_0}\,$. For all
$\rho<\kappa$, we define $\alpha^\rho_\nu= \alpha_{\nu,\nu}$ for
$\nu<\mu_0\,$ and choose an arbitrary element of $I_\nu$ for
$\alpha^\rho_\nu$ when $\mu_0+1<\nu <\mu\,$. Then the elements
$(\alpha^\rho_\nu)_{\nu<\mu}$, $\rho<\kappa$, form a strictly increasing
cofinal sequence in $D$. Hence, $(D,E)$ has cofinality $(\kappa,
\lambda_{\mu_0+1})$ with $\kappa$ the cofinality of a lower cut set in
$I_{\mu_0}$, i.e., $\kappa\in\Cofin (I)\cup \Reg_{<\kappa_0}$ if
$\mu_0=0$, and $\kappa\in\Reg_{<\kappa_{\mu_0}}$ otherwise.

If $E_{\mu_0}$ does not have a minimal element, then a symmetrical
argument shows that the cofinality of $(D,E)$ is $(\kappa_{\mu_0+1},
\lambda)$ for some $\lambda$ the coinitiality of an upper cut set in
$I_{\mu_0}$, i.e., $\lambda\in \Coin (I)\cup \Reg_{<\lambda_0}$ if
$\mu_0=0$, and $\lambda\in\Reg_{<\lambda_{\mu_0}}$ otherwise.



\pars
We have now proved that the cofinalities of the cuts in $J$ are all
among those listed in the statement of the theorem. By our arguments it
is also clear that all listed cofinalities do indeed appear.

\pars
Finally, the easy proof of the last statement of the theorem is left to
the reader.
\end{proof}

The following result is an immediate consequence of the theorem:
\begin{corollary}
Assume that\sn
a) \ $\kappa_1\notin \Coin (I)\cup\Reg_{<\lambda_0}$ and $\lambda_1\notin
\Cofin (I)\cup \Reg_{<\kappa_0}$,\n
b) \ $\kappa_{\nu+1}\geq\lambda_\nu$ and $\lambda_{\nu+1}\geq\kappa_\nu$
for all $\nu<\mu$,\n
c) \ $\kappa_\nu\ne\lambda_\nu$ for $\nu<\mu$ a successor ordinal, and\n
d) \ $\kappa_\nu\geq\mu$ and $\lambda_\nu\geq\mu$ for $\nu<\mu$ a limit
ordinal.
\sn
Then $J$ is symmetrically complete. If in addition $\mu$ is uncountable,
then $J$ is strongly symmetrically complete, and if also $\kappa_0$ and
$\lambda_0$ are uncountable, then $J$ is extremely symmetrically
complete.
\end{corollary}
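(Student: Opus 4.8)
The plan is to read everything off the preceding theorem, which lists the cofinalities of all cuts of $J$; the whole argument is then a matter of checking, family by family, that each listed pair $(\kappa,\lambda)$ is asymmetric (resp.\ strongly asymmetric) under hypotheses a)--d). There is no genuine obstacle beyond this bookkeeping: the hypotheses have been manufactured precisely to destroy each possibility of symmetry, so the only real task is aligning each hypothesis with the family it controls.

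First I would dispose of the principal cuts, whose cofinalities are $(1,\mu)$ and $(\mu,1)$: since $\mu$ is an infinite regular cardinal we have $\mu\neq 1$, so these are asymmetric, and this already suffices for symmetric completeness; once $\mu$ is assumed uncountable they become strongly asymmetric. For the nonprincipal cuts I would match the remaining families to the four hypotheses. The family $(\kappa_1,\lambda),(\kappa,\lambda_1)$ with $\lambda\in\Coin(I)\cup\Reg_{<\lambda_0}$ and $\kappa\in\Cofin(I)\cup\Reg_{<\kappa_0}$ is handled by a), which says exactly that $\kappa_1$ avoids the set from which $\lambda$ is drawn and $\lambda_1$ avoids the set from which $\kappa$ is drawn, forcing $\kappa_1\neq\lambda$ and $\kappa\neq\lambda_1$. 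The successor families $(\kappa_{\nu+1},\lambda),(\kappa,\lambda_{\nu+1})$ with $\kappa<\kappa_\nu$, $\lambda<\lambda_\nu$ are handled by b), since $\kappa_{\nu+1}\geq\lambda_\nu>\lambda$ and $\lambda_{\nu+1}\geq\kappa_\nu>\kappa$. The diagonal successor cuts $(\kappa_\nu,\lambda_\nu)$ are asymmetric by c), and the limit cuts $(\kappa_\nu,\mu'),(\mu',\lambda_\nu)$ with $\mu'=\cf(\nu)<\mu$ are asymmetric by d), because $\kappa_\nu,\lambda_\nu\geq\mu>\mu'$. This exhausts the list (in particular no cut has cofinality $(1,1)$), so $J$ is symmetrically complete.

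The upgrade to strong symmetric completeness under the extra hypothesis that $\mu$ is uncountable is then almost free. Every nonprincipal cut has both entries infinite regular cardinals: each $\kappa_\nu,\lambda_\nu$ is infinite by choice, and the cofinality $\mu'$ of a limit ordinal is infinite. Since such a cut is already asymmetric and $\aleph_0$ is the only countable infinite regular cardinal, at least one of its two entries must be uncountable, so it is strongly asymmetric; this is the order-set analogue of Lemma~\ref{asa}. The only cuts not covered by this remark are the principal ones, $(1,\mu)$ and $(\mu,1)$, and these are strongly asymmetric precisely because $\mu$ is now uncountable. Hence $J$ is strongly symmetrically complete.

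Finally, for extreme symmetric completeness I would invoke the last sentence of the theorem: $J$ has cofinality $\kappa_0$ and coinitiality $\lambda_0$. Under the further assumption that $\kappa_0$ and $\lambda_0$ are uncountable, both are uncountable, so a strongly symmetrically complete $J$ meets the additional requirement in the definition and is extremely symmetrically complete. The only conceptual point in the whole proof is the automatic passage from asymmetric to strongly asymmetric for nonprincipal cuts; everything else is the careful alignment of the four hypotheses with the four nontrivial cofinality families.
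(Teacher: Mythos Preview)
Your proposal is correct and is precisely the verification the paper has in mind when it calls the corollary an ``immediate consequence'' of the preceding theorem (the paper gives no separate proof). Your alignment of hypotheses a)--d) with the four nonprincipal cofinality families is accurate, and your observation that asymmetric nonprincipal cuts with both entries infinite are automatically strongly asymmetric is exactly the content of Lemma~\ref{asa}, applied here to an ordered set rather than a group.
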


It is easy to choose our cardinals by transfinite induction in such a
way that all conditions of this corollary are satisfied. We
choose
\sn
$\bullet$ \ $\kappa_0$ and $\lambda_0$ to be arbitrary uncountable
regular cardinals,
\n
$\bullet$ \ $\mu>\max\{\kappa_0,\lambda_0,\card(I)\}$,
\n
$\bullet$ \ $\kappa_\nu=\mu$ and $\lambda_\nu=\mu^+$ for $\nu=1$ or
$\nu<\mu$ a limit ordinal,
\n
$\bullet$ \ $\kappa_{\nu+1}=\kappa_\nu^{++}$ and $\lambda_{\nu+1}=
\lambda_\nu^{++}$ for $0<\nu<\mu$.

\pars
Sending an element $\alpha\in I$ to an arbitrary element
$(\alpha_\nu)_{\nu<\mu}\in J$ with $\alpha_0=\alpha$ induces an order
preserving embedding of $I$ in $J$. So we obtain the following result:

\begin{corollary}                               \label{embeds}
Every linearly ordered set $I$ can be embedded in an extremely
symmetrically complete ordered set $J$.
\end{corollary}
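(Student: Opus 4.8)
The plan is to apply the preceding corollary (the one with hypotheses a)--d)) to the explicit choice of cardinals displayed just above its statement, and then to exhibit the promised order-preserving embedding of $I$ into the resulting lexicographic product $J$. Thus I would split the argument into two tasks: first, verify that the chosen $\mu$ and the families $(\kappa_\nu,\lambda_\nu)_{\nu<\mu}$ satisfy conditions a)--d) together with the uncountability of $\mu,\kappa_0,\lambda_0$; second, check that the coordinate map really is an order embedding.

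For the first task I would reason as follows. Since $\kappa_0,\lambda_0$ are uncountable and $\mu>\max\{\kappa_0,\lambda_0,\card(I)\}$, the cardinal $\mu$ is uncountable, so the strongest conclusion of the corollary becomes available once a)--d) are in place. For a): because $\mu>\card(I)$, every $S\subseteq I$ satisfies $\ci(S)\le\card(I)<\mu$ and $\cf(S)\le\card(I)<\mu<\mu^+$, whence $\kappa_1=\mu\notin\Coin(I)$ and $\lambda_1=\mu^+\notin\Cofin(I)$; and $\mu>\lambda_0$, $\mu^+>\kappa_0$ give $\kappa_1\notin\Reg_{<\lambda_0}$ and $\lambda_1\notin\Reg_{<\kappa_0}$. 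Condition d) is immediate, since for every limit $\nu$ we have $\kappa_\nu=\mu\ge\mu$ and $\lambda_\nu=\mu^+\ge\mu$. Conditions b) and c) I would establish by transfinite induction on $\nu$: at the base ($\nu=1$) and at every limit stage the values are $\mu$ and $\mu^+$, which satisfy the required inequalities directly, while the successor step passes from $(\kappa_\nu,\lambda_\nu)$ to $(\kappa_\nu^{++},\lambda_\nu^{++})$, and the double successor operation is exactly what supplies enough room to force $\kappa_{\nu+1}\ge\lambda_\nu$ and $\lambda_{\nu+1}\ge\kappa_\nu$ while preserving the strict separation $\kappa_\nu\ne\lambda_\nu$ at successors.

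For the second task I would fix once and for all an element $c_\nu\in I_\nu$ for each $0<\nu<\mu$, regard $I$ as a subset of $I^c\subseteq I_0=\lambda_0^*+I^c+\kappa_0$, and define $\iota\colon I\to J$ by sending $\alpha$ to the sequence whose $0$-th coordinate is $\alpha$ and whose $\nu$-th coordinate is $c_\nu$ for every $\nu>0$. If $\alpha<\alpha'$ in $I$, the two images differ already in coordinate $0$, where the lexicographic ordering is decided by the order of $\alpha,\alpha'$ inside $I^c$; since this order agrees with that of $I$, the map $\iota$ is an order embedding. Combining the two tasks shows that $I$ embeds into the extremely symmetrically complete ordered set $J$, as required.

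I expect the only genuine bookkeeping to lie in conditions b) and c): one must confirm that the $(\cdot)^{++}$ growth never allows the two cardinal sequences to collide or to fall behind one another across all successor and limit stages. This is a routine transfinite induction, and the generous doubling at each successor step is precisely what makes every inequality go through; the embedding and the uncountability checks are then immediate.
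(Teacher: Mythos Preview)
Your proposal is correct and follows essentially the same approach as the paper: the paper simply declares that the displayed choice of cardinals satisfies conditions a)--d) of the preceding corollary and then exhibits the coordinate embedding $\alpha\mapsto(\alpha_\nu)_{\nu<\mu}$ with $\alpha_0=\alpha$. Your write-up supplies the routine verifications the paper leaves to the reader; the key invariant making b) and c) work is that $\lambda_\nu=\kappa_\nu^{+}$ for every $\nu\geq 1$, which is set at $\nu=1$ and at limits and is preserved by the double-successor step, so you may want to state that explicitly rather than leave it implicit in ``enough room''.
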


\parb
Our above construction can be seen as a ``brute force'' approach. We
will now present a construction that offers more choice for the
prescribed cofinalities.

If an index set $I$ is not well ordered, then the lexicographic product
of ordered abelian groups $G_i\,$, $i\in I$, is defined to be the subset
of the product consisting of all elements $(g_i)_{i\in I}$ with well
ordered support $\{i\in I\mid g_i\ne 0\}$. Likewise, the lexicographic
sum is defined to be the subset consisting of all elements $(g_i)_{i\in
I}$ with finite support $\{i\in I\mid g_i\ne 0\}$. The problem with
ordered sets is that they ususally do not have distinguished elements
(like neutral elements for an operation). The remedy used in
\cite{[KKS]} is to fix distinguished elements in all linear orderings we
wish to use for our lexicographic sum. Hausdorff (\cite{[Hd]}) does this
in quite an elegant way: he observes that the full product is still
partially ordered. Singling out one element in the product then
determines the distinguished elements in the ordered sets (being the
corresponding components of the element), and in this manner one obtains
an associated maximal linearly ordered subset of the full product.

While the index sets we use here are ordinals and hence well ordered,
which makes a condition on the support unnecessary for the work with
lexicographic products, we will use the idea (as apparent in the
definition of the lexicographic sum) that certain elements can be
singled out by means of their support.

\pars
We choose infinite regular cardinals $\mu$, $\kappa_0$ and
$\lambda_0\,$. Further, we denote by $\On$ the class of all ordinals and
%
set
\[
I_0\>:=\>\lambda_0^*+I^c+\kappa_0 \quad\mbox{and}\quad
I_\nu:=\On^*+\mu+\{\zero\}+\mu^*+\On\;\;
\mbox{ for $0<\nu<\mu$,}
\]
assuming that $\zero$ does not appear in $I^c$ or any ordinal or
reversed ordinal. Note that On can be replaced by a large enough
cardinal; its minimal size depends on the choice of $I$, $\mu$,
$\kappa_0$ and $\lambda_0\,$. But the details are not essential
for our construction, so we skip them.

We define $J^\circ$ to consist of all elements of the lexicographic
product over the $I_\nu$ with index set $\mu$ whose support
\[
\supp (\alpha_\nu)_{\nu<\mu}\>=\>\{\nu\mid \nu<\mu\mbox{ and }
\alpha_\nu\ne \zero\}
\]
is an initial segment of $\mu$ (i.e., an ordinal $\leq\mu$).

A further refinement of our construction uses the idea to define
suitable subsets of $J^\circ$ by restricting the choice of the
coefficient $\alpha_\nu$ in dependence on the truncated sequence
$(\alpha_{\rho})_{\rho<\nu}\,$.

For every $\nu<\mu$ we consider the following set of truncations:
\[
J^\circ_\nu\>:=\>\{(\alpha_\nu)_{\rho\leq\nu}\mid
(\alpha_\nu)_{\rho<\mu}\in J^\circ\}\>.
\]
By induction on $\nu<\mu$ we define subsets
\[
J_{\nu}\>\subset\>J^\circ_\nu
\]
as follows:
\sn
(J1) \ $J_0:=I_0\,$.
\sn
(J2) \ If $J_{\nu'}$ for all $\nu'<\nu$ are already constructed, then we
first define the auxiliary set
\[
J_{<\nu}\>:=\>\{(\alpha_\rho)_{\rho<\nu}\in J^\circ_\nu\mid
(\alpha_\rho)_{\rho<\nu'}\in J_{\nu'}\mbox{ for all }\nu'<\nu\}\>.
\]
For $a=(\alpha_\rho)_{\rho<\nu}\in J_{<\nu}$ we set
\[
\kappa_a\>:=\>\cf (\{b\in J_{<\nu}\mid b<a\})\;\mbox{ and }
\lambda_a\>:=\>\ci (\{b\in J_{<\nu}\mid b>a\})\>,
\]
and
\begin{equation}                            \label{defInu}
I_\nu(a)\>:=\>
\left\{
\begin{array}{l}
\{\zero\}\;\mbox{ if $\alpha_\rho=0$ for some }\rho<\nu\,,\\
\phir(\kappa_a)^* +\mu+ \{\zero\}+\mu^*+ \phil(\lambda_a) \;
\mbox{ otherwise.}
\end{array}
\right.
\end{equation}
So $I_\nu((\alpha_\rho)_{\rho<\nu})\subset\> I_\nu\,$. Now we let
$(\alpha_\rho)_{\rho\leq\nu}\in J^\circ_\nu$ be an element of $J_\nu$ if
and only if $a=(\alpha_\rho)_{\rho<\nu}\in J_{<\nu}$ and $\alpha_\nu \in
I_\nu(a)$. Note that by our definition, also the following condition
will be satisfied:
\sn
(J3) \ if $\alpha_{\nu}=\zero$ for $\nu<\nu'<\mu$, then
$\alpha_{\nu'}=\zero\,$.

\sn
After having defined $J_{\nu}$ for all $\nu<\mu$, we set
\[
J\>:=\>\{(\alpha_\rho)_{\rho<\mu}\in J^\circ\mid
(\alpha_\rho)_{\rho\leq\nu} \in J_\nu \mbox{ for all }\nu<\mu\}\>.
\]

\pars
The following is our first step towards the proof of
Theorem~\ref{IextJ}:

\begin{theorem}                             \label{propJ}
With the sets $\Rl$ and $\Rr$ defined as in the introduction,
assume that (\ref{phi}) holds.
Then the cofinalities of the cuts of $J$ are:
\[
\{(1,\mu),(\mu,1)\} \,\cup\, \{(\kappa,\phir(\kappa))\mid \kappa\in
\Rl\} \,\cup\, \{(\phil(\lambda),\lambda)\mid\lambda\in \Rr\}\>.
\]
Further, the cofinality of $J$ is $\kappa_0$ and its coinitiality is
$\lambda_0\,$.
\end{theorem}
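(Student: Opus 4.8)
The plan is to analyze an arbitrary cut $(D,E)$ in $J$ by locating the first coordinate at which $D$ and $E$ diverge, exactly as in the proof of the earlier (unrefined) construction theorem, but now keeping careful track of how the inductively-defined sets $I_\nu(a)$ constrain the available coefficients. Concretely, for a nonprincipal cut I would define $S$ to be the set of $\nu'<\mu$ admitting witnesses $a_{\nu'}\in D$ and $b_{\nu'}\in E$ agreeing on all coordinates $\leq\nu'$, show $S$ is a proper initial segment $\mu_0$ of $\mu$, and verify the coherence property that for $\nu_1<\nu_2$ in $S$ the witness truncations agree up to $\nu_1$. This produces a well-defined common truncation $a=(\alpha_{\nu,\nu})_{\nu<\mu_0}\in J_{<\mu_0}$, and then the induced cut $(D_{\mu_0},E_{\mu_0})$ in the fibre $I_{\mu_0}(a)$.

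**Next** I would split into the same cases as before according to whether one of $D_{\mu_0},E_{\mu_0}$ is empty (forcing $\mu_0$ to be a limit ordinal and yielding cofinalities $(\kappa_{\mu_0},\mu')$ or $(\mu',\lambda_{\mu_0})$ with $\mu'=\cf(\mu_0)$) or both are nonempty. The crucial new feature is the structure of $I_{\mu_0}(a)=\phir(\kappa_a)^*+\mu+\{\zero\}+\mu^*+\phil(\lambda_a)$: when $D_{\mu_0}$ has a maximal element, the cofinality of $D$ is read off from the initial segment $\mu$ sitting to the right of $\{\zero\}$ together with the subsequent fibre at level $\mu_0+1$, and by the inductive definition of $\kappa_a,\lambda_a$ this forces the coinitiality of $E$ to be governed by $\phir$ applied to $\kappa_a=\cf(\{b\in J_{<\mu_0}\mid b<a\})$. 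I would show the key identification that $\kappa_a$ equals the left cofinality $\kappa$ of the whole cut $(D,E)$, so that the right side is exactly $\phir(\kappa)$; symmetrically the case where $E_{\mu_0}$ has a minimal element gives $(\phil(\lambda),\lambda)$. This is where condition (\ref{phi}) enters, guaranteeing that $\phir(\kappa)\in\Rr$ and $\phil(\lambda)\in\Rl$ so that the claimed values are the genuinely realized cofinalities and stay within the prescribed ranges.

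**The hard part will be** the bookkeeping that matches $\kappa_a$ (a cofinality computed inside the partially-built set $J_{<\mu_0}$) with the actual left cofinality of the cut $(D,E)$ in the full set $J$. One must check that lifting a cofinal sequence from $\{b\in J_{<\mu_0}\mid b<a\}$ to $D$, and conversely projecting a cofinal sequence of $D$ down to level $\mu_0$, preserves cofinality; the subtlety is that elements of $D$ may branch away from $a$ at various coordinates below $\mu_0$, and one needs the coherence property established in the first step to argue that the relevant cofinal behaviour is already captured at the truncation level. The role of the singled-out support (the $\zero$ marker and condition (J3)) is precisely to make this projection well-behaved, since it pins down a canonical continuation and prevents the fibres from introducing spurious cofinalities.

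**Finally**, having shown every cut has one of the listed cofinalities, I would argue the converse realizability: each pair $(1,\mu)$, $(\mu,1)$, $(\kappa,\phir(\kappa))$ for $\kappa\in\Rl$, and $(\phil(\lambda),\lambda)$ for $\lambda\in\Rr$ actually occurs, by exhibiting an explicit cut whose divergence level and fibre configuration produce it — using that $\Rl=\Cofin(I)\cup\Reg_{<\kappa_0}\cup\Reg_{<\mu}$ and the analogous description of $\Rr$ tell us which cofinalities $\kappa_a$ are attainable as truncations branch. The cofinality and coinitiality of $J$ itself follow by inspecting the outermost fibre $I_0=\lambda_0^*+I^c+\kappa_0$, exactly as in the unrefined theorem; I would leave that last computation to the reader.
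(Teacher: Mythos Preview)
Your overall strategy is close to the paper's, but there is a genuine indexing error that breaks the argument. You take $a=(\alpha_{\nu,\nu})_{\nu<\mu_0}\in J_{<\mu_0}$ to be the common truncation and claim the coinitiality of $E$ is $\phir(\kappa_a)$ with $\kappa_a=\cf(\{b\in J_{<\mu_0}\mid b<a\})=\cf(D)$. But $\kappa_a$ at \emph{this} level only governs the shape of the fibre $I_{\mu_0}(a)$ in which the cut $(D_{\mu_0},E_{\mu_0})$ sits; it does not give the cut's cofinality. That is read off one level higher: when $D_{\mu_0}$ has no maximum and $E_{\mu_0}$ has minimum $\tilde\beta$ (this, not the case you name, is what produces the form $(\kappa,\phir(\kappa))$), the elements of $E$ extending $a':=(a,\tilde\beta)\in J_{\mu_0}=J_{<\mu_0+1}$ are coinitial in $E$, with coinitiality equal to that of $I_{\mu_0+1}(a')$, namely $\phir(\kappa_{a'})$. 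It is $\kappa_{a'}=\cf(\{b\in J_{\mu_0}\mid b<a'\})$ that equals $\cf(D)$; your $\kappa_a$ cannot, since $D$ contains elements extending $a$ and these dominate everything below $a$ in $J_{<\mu_0}$. The paper sidesteps this slip by defining the critical level as the least $\nu$ for which $\iota_\nu(D_\nu)$ is already cofinal in $D$ (or $\iota_\nu(E_\nu)$ coinitial in $E$), which places the boundary element directly in $J_\nu$ and makes the identification $\kappa_{a'}=\cf(D_\nu)=\cf(D)$ immediate.

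There is also a missing ingredient: you never argue that the values $\kappa_{a'}$ arising this way lie in $\{1\}\cup\Rl$. This does not follow from $\kappa_{a'}=\cf(D)$; the paper proves it by a separate induction on the level $\nu$, using (\ref{phi}) at each step to show that the fibres $I_\nu(a)$ only ever contribute lower-cut-set cofinalities in $\{1\}\cup\Rl$ (and upper ones in $\{1\}\cup\Rr$). Your remark that (\ref{phi}) ``guarantees $\phir(\kappa)\in\Rr$'' presupposes $\kappa\in\Rl$, which is precisely what must be established. Finally, your limit-ordinal case invokes the fixed cardinals $\kappa_{\mu_0},\lambda_{\mu_0}$ from the unrefined construction; here the fibre cofinalities depend on the truncation and must be recomputed as $\phil(\lambda_a),\phir(\kappa_a)$ with $\kappa_a=\lambda_a=\cf(\mu_0)$.
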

\begin{proof}
First, we observe that for each $\nu<\mu$ we obtain an embedding
\[
\iota_\nu:\>J_\nu\hookrightarrow J
\]
by sending $(\alpha_\rho)_{\rho\leq\nu}$ to
$(\beta_\rho)_{\rho\leq\mu}$, where $\beta_\rho=\alpha_\rho$ for
$\rho\leq\nu$ and $\beta_\rho=\zero$ for $\nu<\rho<\mu$.

We start by proving that the principal cuts in $J$ have cofinalities
$(1,\mu)$ and $(\mu,1)$. Take $(\alpha_\rho)_{\rho\leq\mu}\in J$ and
assume first that its support is smaller than $\mu$. Set $\nu:=\min
\{\rho<\mu\mid\alpha_\rho=\zero\}\geq 1$. Then the second case of
definition (\ref{defInu}) applies and therefore, the
cofinalities of the principal cuts generated by $\zero$ in $I_\nu$ are
$(1,\mu)$ and $(\mu,1)$ and therefore, the cofinalities of the principal
cuts generated by $(\alpha_\rho)_{\rho\leq\nu}$ in $J_\nu$ are also
$(1,\mu)$ and $(\mu,1)$. By means of the embeddings $\iota_\nu$ it
follows that the cofinalities of the principal cuts generated by
$(\alpha_\rho)_{\rho<\mu}$ in $J$ are again $(1,\mu)$ and $(\mu,1)$.

Now assume that the support of $a:=(\alpha_\rho)_{\rho<\mu}$ is $\mu$.
For each $\nu<\mu$ there are elements $\beta_\nu,\gamma_\nu\in I_\nu
((\alpha_\rho)_{\rho<\nu})$ with $\beta_\nu<\alpha_\nu<\gamma_\nu$. We
set $\beta_\rho:=\gamma_\rho:= \alpha_\rho$ for $\rho<\nu$, and define
\[
b_\nu\>:=\>\iota_\nu((\beta_\rho)_{\rho\leq\nu}) \;\mbox{ and }\;
c_\nu\>:=\>\iota_\nu((\gamma_\rho)_{\rho\leq\nu})\>.
\]
Then we find that whenever $\nu<\nu'<\mu$, then
\[
b_\nu\><\>b_{\nu'}\><\>a\><\>a_{\nu'}\><\>a_{\nu}\>.
\]
This proves that again, the cofinalities of the principal cuts generated
by $(\alpha_\rho)_{\rho<\mu}$ in $J$ are $(1,\mu)$ and $(\mu,1)$.

\pars
Now take any nonprincipal cut $(D,E)$ in $J$. By restricting the
elements to index set $\nu+1=\{\rho\mid \rho\leq\nu\}$, this cut
induces a quasicut $(D_\nu,E_\nu)$ in $J_\nu$, that is, $J_\nu=D_\nu\cup
E_\nu\,$, $D_\nu<E_\nu\,$, and therefore, $D_\nu\cap E_\nu$ contains at
most one element.

Assume that $\nu<\mu$ is such that $\iota_\nu(D_\nu)$ is not a cofinal
subset of $D$ and $\iota_\nu(E_\nu)$ is not a coinitial subset of $E$.
Then we have one of the following cases:
\sn
$\bullet$ \ $\iota_\nu(D_\nu)\cap E\ne\emptyset$ or
$\iota_\nu(E_\nu)\cap D\ne\emptyset$,
\sn
$\bullet$ \ there are $d_\nu\in D$ and $e_\nu\in E$ such that
$\iota_\nu(D_\nu)<d_\nu<e_\nu< \iota_\nu(E_\nu)$, which yields that the
restrictions of $d_\nu$ and $e_\nu$ to index set $\nu+1$ are equal and
lie in $D_\nu\cap E_\nu$.
\sn
In both cases, $D_\nu\cap E_\nu\ne\emptyset$. This implies that also
$D_{\nu'}\cap E_{\nu'}\ne \emptyset$ for all $\nu' < \nu$, with the
element in $D_{\nu'}\cap E_{\nu'}$ being the restriction of the element
in $D_\nu\cap E_\nu$.

Now we show that there is some $\nu<\mu$ such that $\iota_\nu(D_\nu)$
is cofinal in $D$ or $\iota_\nu(E_\nu)$ is coinitial in $E$. Suppose
that the contrary is true. Then
$D_\nu\cap E_\nu\ne\emptyset$ for all $\nu<\mu$ and there is a unique
element $a\in J$ whose restriction to index set $\nu+1$ lies in $D_\nu
\cap E_\nu\,$, for all $\nu<\mu$. It follows that $a$ is either the
largest element of $D$ or the smallest element in $E$. But this
contradicts our assumption that $(D,E)$ is nonprincipal.

We take $\nu$ to be minimal with the property that $\iota_\nu
(D_\nu)$ is cofinal in $D$ or $\iota_\nu(E_\nu)$ is coinitial in $E$.
From what we have shown above, it follows that $D_{\nu'}\cap E_{\nu'}\ne
\emptyset$ for all $\nu'<\nu$ and there is $(\alpha_\rho)_{\rho<\nu}\in
J_{<\nu}$ whose restriction to $\nu'+1$ lies in $D_{\nu'}\cap E_{\nu'}$,
for all $\nu'<\nu$. Therefore, there must be elements in both $D_\nu$
and $E_\nu$ whose restrictions to $\nu$ are equal to
$(\alpha_\rho)_{\rho<\nu}$. Consequently, with
\pars
$\ovl{D}_\nu\>:=\>\{\alpha_\nu\in I_\nu((\alpha_\rho)_{\rho<\nu})\mid
(\alpha_\rho)_{\rho\leq\nu}\in D_\nu\}$ and
\par
$\ovl{E}_\nu\>:=\>\{\alpha_\nu\in I_\nu((\alpha_\rho)_{\rho<\nu})\mid
(\alpha_\rho)_{\rho\leq\nu}\in E_\nu\}$,
\sn
$(\ovl{D}_\nu,\ovl{E}_\nu)$ is a cut in $I_\nu((\alpha_\rho)_{\rho<\nu})
\,$. But $I_\nu ((\alpha_\rho)_{\rho<\nu})$ is cut complete, and so
there is some $\alpha_\nu\in I_\nu((\alpha_\rho)_{\rho<\nu})$ such that
$a=(\alpha_\rho)_{\rho\leq\nu}$ is either the largest element of $D_\nu$
or the smallest element of $E_\nu\,$. We note that $\alpha_\nu\ne
\zero\,$; otherwise, the element $(\alpha_\rho)_{\rho<\mu}$ with
$\alpha_\rho= \zero$ for $\nu\leq\rho<\mu$, which is the unique element
in $J$ whose restriction to $\nu+1$ is $a$, would be the largest element
of $D$ or the smallest element of $E$ in contradiction to our assumption
on $(D,E)$. Hence by construction, for every
\[
\alpha\in I_{\nu+1}((\alpha_\rho)_{\rho\leq\nu})\>=\>
\phir(\kappa_a)^* +\mu+ \{\zero\}+\mu^*+ \phil(\lambda_a)
\]
there is an element $(\alpha_\rho)_{\rho<\mu}$ with $\alpha_{\nu+1}=
\alpha$ whose restriction to $\nu+1$ is $a$.

\pars
We assume first that $\iota_\nu(D_\nu)$ is cofinal in $D$. Since $(D,E)$
is nonprincipal, $D$ and hence also $D_\nu$ has no largest element. So
$a$ is the smallest element of $E_\nu\,$. Consequently,
\[
\iota_{\nu+1}(\{(\alpha_\rho)_{\rho\leq\nu+1}\mid\alpha_{\nu+1}\in
I_{\nu+1}((\alpha_\rho)_{\rho\leq\nu})\})
\]
is coinitial in $E$. We observe that $\kappa_a=
\cf(D_\nu)=\cf(D)\ne 1$. By construction, the coinitiality of $I_{\nu+1}
((\alpha_\rho)_{\rho\leq\nu})$ is $\phir(\kappa_a)$. This proves that
the cofinality of $(D,E)$ is $(\kappa_a,\phir(\kappa_a))$.

If on the other hand, $\iota_\nu(E_\nu)$ is coinitinal in $E$, then
$a$ is the largest element of $D_\nu$ and one shows along the same lines
as above that the cofinality of $(D,E)$ is $(\phil(\lambda_a),\lambda_a)$
with $\lambda_a=\ci(E_\nu)=\ci(E)\ne 1$.

We have to prove that the cardinals $\kappa_a$ and $\lambda_a$ that
appear in the construction, i.e., in definition (\ref{defInu}), are
elements of $\{1\}\cup\Rl$ and $\{1\}\cup\Rr$, respectively. We observe
that $\kappa_a$ and $\lambda_a$ appear in definition (\ref{defInu}) only
if $a=(\alpha_\rho)_{\rho<\nu}\in J_{<\nu}$ is such that $\alpha_\rho\ne
\zero$ for all $\rho<\nu$. We show our assertion by induction on
$1\leq\nu\leq\mu$. We do this for $\kappa_a\,$; for $\lambda_a$ the
proof is similar. First, we consider the successor case $\nu=\sigma+1$.
We set $\ovl{a}= (\alpha_\rho)_{\rho<\sigma})$. If $\sigma\geq 1$, then
our induction hypothesis states that our assertion is true for
$\kappa_{\ovl{a}}$ and $\lambda_{\ovl{a}}\,$. We observe that because
the second case of definition (\ref{defInu}) applies to $\ovl{a}$,
\begin{eqnarray*}
\kappa_a & = & \cf(\{(\beta_\rho)_{\rho\leq\sigma}\in J_\sigma\mid
\beta_\rho= \alpha_\rho\mbox{ for }\rho<\sigma\mbox{ and }
\beta_{\sigma}< \alpha_\sigma\}) \\
 & = & \cf(\{\beta\in I_\sigma(\ovl{a})\mid\beta< \alpha_\sigma\})\>.
\end{eqnarray*}
This is the cofinality of a lower cut set of a cut in $I_\sigma$.
Therefore, if $\kappa_a$ is infinite, it is an element of $\Cofin (I)
\cup\Reg_{<\kappa_0}\cup \Reg_{<\mu}=\Rl$ if $\sigma=0$, and of
$\Reg_{<\phil(\lambda_{\ovl{a}})} \cup \Reg_{<\mu}$ otherwise. In the
latter case, $\lambda_{\ovl{a}}\in\Rr$ by induction hypothesis, hence
$\phil(\lambda_{\ovl{a}})\in \Rl$ by (\ref{phi}), which yields that
$\Reg_{<\phil(\lambda_{\ovl{a}})} \cup \Reg_{<\mu}\subseteq\Rl$.
Altogether, we have proved that $\kappa_a\in\{1\}\cup\Rl$.

\pars
Now we consider the case of $\nu$ a limit ordinal. Let $\mu'$ be its
cofinality. Then $\mu'\in \Reg_{<\mu}\,$. With a similar construction as
in the beginning of the proof one shows that the principal cuts
generated by elements in $J_{<\nu}$ have cofinalities $(\mu',1)$ and
$(1,\mu')$. This yields that $\kappa_a\in\Rl$ and $\lambda_a\in\Rr$.


\parm
It remains to prove that all cofinalities listed in the assertion of our
theorem actually appear as cofinalities of cuts in $J$. Since for all
cardinals $\kappa\in \Cofin (I) \cup \Reg_{<\kappa_0}$, there is a cut
in $I_0$ with cofinality $(\kappa,1)$, our construction at level $\nu=1$
shows that $(\kappa, \phir(\kappa))$ appears as the cofinality of a cut
in $J$. Similarly, one shows that $(\phil(\lambda),\lambda)$ appears as
the cofinality of a cut in $J$ for every $\lambda\in\Coin (I)\cup
\Reg_{<\lambda_0}$.

Now take any regular cardinal $\mu'<\mu$. For an arbitrary
$a=(\alpha_0)\in J_0$ we see that the second case of definition
(\ref{defInu}) applies to $I_1(a)$, so that there is a cut in $I_1$ with
cofinality $(\mu',1)$. Our construction at level $\nu=2$ then shows that
$(\mu', \phir(\mu'))$ appears as the cofinality of a cut in $J$.
Similarly, one shows that $(\phil(\mu'),\mu')$ appears as the cofinality
of a cut in $J$.


\pars
The proof of the last statement of the theorem is again left to
the reader.
\end{proof}

The following result is an immediate consequence of
Theorem~\ref{propJ}, and it proves Theorem~\ref{IextJ}:
\begin{corollary}
Assume in addition to the previous assumptions
that $\phil(\kappa)\ne\kappa\ne\phir(\kappa)$ for all $\kappa\in\Rl\cup
\Rr$. Then $J$ is a symmetrically complete extension of $I$. If in
addition $\mu$ is uncountable, then $J$ is strongly symmetrically
complete.
\end{corollary}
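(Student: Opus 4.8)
The plan is to deduce the corollary essentially by inspection from Theorem~\ref{propJ}, which already hands us the complete list of cofinalities of cuts of $J$. First I would dispatch the extension claim: by construction $J_0=I_0=\lambda_0^*+I^c+\kappa_0$ contains the completion $I^c$ as a middle block, hence a copy of $I$ itself, and the embedding $\iota_0:J_0\hookrightarrow J$ (padding a sequence with $\zero$ in all coordinates $\rho$ with $0<\rho<\mu$) carries this copy into $J$. Composing $I\hookrightarrow I^c\hookrightarrow I_0=J_0\hookrightarrow J$ (the last arrow being $\iota_0$) gives the required order-preserving embedding, so $J$ is an extension of $I$.

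The heart of the argument is then to run through the three kinds of cofinality listed in Theorem~\ref{propJ} and check asymmetry in each case. The principal cofinalities $(1,\mu)$ and $(\mu,1)$ are asymmetric simply because $\mu$ is infinite and hence $\mu\ne1$. A cut of cofinality $(\kappa,\phir(\kappa))$ with $\kappa\in\Rl$ is asymmetric because the added hypothesis gives $\phir(\kappa)\ne\kappa$; symmetrically, a cut of cofinality $(\phil(\lambda),\lambda)$ with $\lambda\in\Rr$ is asymmetric because $\phil(\lambda)\ne\lambda$. As these exhaust every cut, $J$ is symmetrically complete.

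For the strengthening I would assume $\mu$ uncountable and verify that each of the same three families has at least one uncountable entry. The principal cofinalities $(1,\mu)$ and $(\mu,1)$ now become strongly asymmetric precisely because $\mu$ is uncountable; this is the one and only place where the extra hypothesis on $\mu$ is used. For the nonprincipal cuts, both entries of $(\kappa,\phir(\kappa))$ lie in $\Reg$ and are therefore infinite, and since $\aleph_0$ is the unique countable infinite regular cardinal while the two entries are distinct, they cannot both equal $\aleph_0$; hence at least one is uncountable. The identical remark applies to $(\phil(\lambda),\lambda)$. Thus every cut of $J$ is strongly asymmetric, and $J$ is strongly symmetrically complete.

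I do not anticipate any genuine obstacle: the entire combinatorial work has been absorbed into Theorem~\ref{propJ}, and what remains is the elementary observation that two distinct members of $\Reg$ can never both be countable. The only subtlety worth flagging is that the uncountability of $\mu$ is needed solely to promote the two principal cofinalities $(1,\mu),(\mu,1)$ from asymmetric to strongly asymmetric, whereas the nonprincipal cuts upgrade automatically --- the same phenomenon already recorded for groups in Lemma~\ref{asa}.
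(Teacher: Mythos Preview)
Your proposal is correct and follows precisely the route the paper intends: the paper simply declares the corollary an ``immediate consequence of Theorem~\ref{propJ}'' without further argument, and your case-by-case inspection of the listed cut cofinalities, together with the embedding $I\hookrightarrow I^c\hookrightarrow I_0=J_0\stackrel{\iota_0}{\hookrightarrow} J$, is exactly the unpacking a reader would supply. Your observation that the nonprincipal cofinalities $(\kappa,\phir(\kappa))$ and $(\phil(\lambda),\lambda)$ are automatically strongly asymmetric (two distinct elements of $\Reg$ cannot both equal $\aleph_0$) is the right point and matches the paper's earlier Lemma~\ref{asa}.
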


\begin{remark}
In both constructions that we have given in this section, every element
in the constructed ordered set has, in the terminology of Hausdorff,
character $(\mu,\mu)$.
\end{remark}

%
%
\section{Construction of symmetrically complete ordered
extensions}                                            \label{sectemb}
Take any ordered abelian group $G$. We wish to extend it to an extremely
symmetrically complete ordered abelian group. We use the well known fact
that $G$ can be embedded in a suitable Hahn product $H_0=\pH_I \R$, for
some ordered index set $I$. By Corollary~\ref{embeds}, there is an
embedding $\iota$ of $I$ in an extremely symmetrically complete linearly
ordered set $J$. We set $H=\pH_J \R$ and note that there is a canonical
order preserving embedding $\varphi$ of $H_0=\pH_I \R$ in $H=\pH_J \R$
which lifts $\iota$ by sending an element $(r_\gamma)_{\gamma\in I}$ to
the element $(r'_\delta)_{\delta\in J}$ where $r'_\delta=r_\gamma$ if
$\delta= \iota(\gamma)$ and $r'_\delta=0$ if $\delta$ is not in the
image of $\iota$. By Theorem~\ref{MTag}, $H$ is an extremely
symmetrically complete ordered abelian group.
We have now proved the first part of Theorem~\ref{ext}.

\parm
Take any ordered field $K$. We wish to extend it to an extremely
symmetrically complete ordered field. First, we extend $K$ to its real
closure $K^{\mbox{\tiny\rm rc}}$. From [Ka] we know that
$K^{\mbox{\tiny\rm rc}}$ can be embedded in the power series field
$\R((G))$ where $G$ is the value group of $K^{\mbox{\tiny\rm rc}}$ under
the natural valuation. By what we have already shown, $G$ admits an
embedding $\psi$ in an extremely symmetrically complete ordered abelian
group $H$. By a definition analogous to the one of $\varphi$ above, one
lifts $\psi$ to an order preserving embedding of the power series field
$\R((G))$ in the power series field $\R((H))$. By Theorem~\ref{MTf},
$\R((H))$ is an extremely symmetrically complete ordered field.
We have thus proved the second part of Theorem~\ref{ext}.

\newcommand{\lit}[1]{\bibitem{#1}}

\end{document}